%
%
%
%
%
%
\documentclass{article}       
%
%
\usepackage{graphicx}
%
%

\usepackage{amsfonts}
\usepackage{amssymb}
\usepackage{amsmath}
\usepackage{amsthm}
\usepackage{textcomp}
\usepackage{bm}
\usepackage{graphicx,epsfig}

\newtheorem{assumption}{Assumption}[section]

\newtheorem{corollary}{Corollary}[section]
\newtheorem{theorem}{Theorem}[section]
\newtheorem{lemma}{Lemma}[section]

%
%
\begin{document}

\title{Convergence analysis of GMRES for the Helmholtz equation via pseudospectrum \thanks{This work was supported by the Alfred Kordelin's Foundation and Academy of Finland projects 13267297 and 14341}}


\author{Antti Hannukainen\footnote{Aalto University, Department of Mathematics and Systems Analysis, P.O. Box 11100, FI-00076 Aalto, Finland, email:antti.hannukainen@aalto.fi}}



\maketitle

\begin{abstract}
Most finite element methods for solving time-harmonic wave - propagation problems lead to a linear system with a non-normal coefficient matrix.  The non-normality is due to boundary conditions and losses. One way to solve these systems is to use a preconditioned iterative method. Detailed mathematical analysis of the convergence properties of these methods is important for developing new and understanding old preconditioners. Due to non-normality, there is currently very little existing literature in this direction. In this paper, we study the convergence of GMRES for such systems by deriving inclusion and exclusion regions for  the pseudospectrum of the coefficient matrix.  All analysis is done a priori by relating the properties of the weak problem to the coefficient matrix. The inclusion is derived from the stability properties of the problem and the exclusion is established via field of values and boundedness of the weak form. The derived tools are applied to estimate the pseudospectrum of time-harmonic Helmholtz equation with first-order absorbing boundary conditions,  with and without a shifted-Laplace preconditioner. 


\end{abstract}

\section{Introduction} Several different strategies for discretizing time-harmonic wave propagation problems using finite elements have been proposed in the literature. For typical problems, most of these strategies  lead to a linear system with a large, sparse, indefinite and non-normal coefficient matrix. The indefiniteness is due to the wave-nature of the problem and the non-normality arises either from losses or truncation of infinite domains to finite ones. The large size of the system is due to the number of degrees of freedom required to resolve an oscillating solution. Because of their properties, the linear systems related to time-harmonic wave propagation problems are difficult to solve. Memory is an issue with direct solvers and lack of efficient preconditioners with iterative ones. 

In order to develop new preconditioners and to understand old ones, it is important to know their effect on the convergence properties of the applied iterative method. Unfortunately, the convergence of iterative methods for linear systems with a non-normal coefficient matrix is a difficult subject of study. When the non-normality is significant, the iterative properties can be very different from the ones indicated by eigenvalues, \cite{Sa:2003,Gr:1997}. Similar difficulties are met with other properties related to the non-normal matrices, e.g., behavior of matrix exponentials cannot be predicted by eigenvalues, \cite{TrEm:2005}. Determining when the non-normality has a significant impact to iterative properties is complicated. The simplest way to estimate the impact is to compute one of the commonly used scalar measures of non-normality, e.g., $\| AA^* - A^*A \| \| A\|^{-1}$, the conditioning of eigenvectors or the conditioning of individual eigenvalues, \cite{Tr:99}. However, except for the first one, these measures are not computable for large matrices. In addition, they can vary considerably even for relatively small systems  \cite{Tr:99}.

The convergence of preconditioned iterative methods has been extensively studied in the context of finite element methods, \cite{ToWi:05}. However, majority of the research has focused on real valued symmetric positive definite problems. The finite element discretization of these problems also leads to symmetric positive definite linear systems, which are solved using the preconditioned conjugate gradient method (PCG). The aim in the analysis of these methods is to estimate the convergence rate before computations.  Only few of the existing works deal with indefinite linear systems, \cite{Ys:1989,Jay:03,CaWi:92,CaWi:1993,BrPaLe:1993}, and even fewer with non-normal indefinite ones, \cite{GiErVu:07,Ha13,GiEr:2006}.

Most preconditioners for finite element discretizations of elliptic weak problems have been analyzed by using the abstract framework of  Schwarz methods,  \cite{ToWi:05}. This framework is based on studying the properties of the underlying weak problem instead of the linear system. The convergence of PCG is related to the weak form via Rayleigh quotients. Such analysis is done in the inner product induced by the bilinear form. As Rayleigh quotients are the first step in the existing analysis, it does not carry over to complex valued,  indefinite, non-normal linear systems. Such systems require a different set of analytics tools. 

There currently exists three different ways to analyze iterative properties of a non-normal matrix \cite{Em:1999}: to study the field of values (FOV), pseudospectrum, or to include conditioning of eigenvectors to the convergence estimates.  For time-harmonic wave-equations, estimating eigenvector conditioning before the matrices are constructed seems to be complicated and thus this approach is not suitable for our purposes. FOV has been applied to analyze the preconditioned time-harmonic Helmholtz equation e.g. in \cite{Ha13}. However, FOV is always a convex set containing all eigenvalues of the matrix. As we will see, the spectrum of the problems we are interested in curls around the origin making FOV based methods unsuitable for our purposes. In contrast, the pseudospectrum can be a non-convex set and as we will show it can be estimated a priori, making it the best option of the three for this work.

In this paper, we study pseudospectrum as a tool for relating the properties of the weak problem to the convergence of the GMRES method. We derive convergence estimates for GMRES by establishing inclusion and exclusion regions for the pseudospectrum. The exclusion region is derived from the stability estimates of the weak problem and the inclusion region is based on then relation between pseudospectrum and FOV. In several cases, an inclusion for FOV can be easily obtained based on continuity properties of the weak form. All analysis is done a priori, so that the regions can be obtained without constructing the actual matrices or performing computations with them. The derived bounds are explicit in the relevant parameters of the problem, e.g.,  mesh size, wave-number and the losses. The presented analysis relies on general properties of the weak problem, stability and continuity so it is possible that it can be applied to other preconditioners and problems.


The paper is organized as follows. We begin with some preliminaries  and proceed to give estimates relating pseudospectrum to a weak problem. After establishing these abstract results, we apply them to three example problems. We begin the examples by considering the Poisson problem, which is included for easy reference on what kind of information the derived estimates can deliver. Then we apply the presented tools to time-harmonic Helmholtz equation with and without a shifted-Laplace preconditioner. We end the paper with a discussion of the presented material.

\section{Preliminaries}

Our model problem is:  Find $u \in V$ such that
\begin{equation}
\label{eq:weak}
a( u,v) = L(v) \quad \forall \; v \in  V,
\end{equation}
where $V$ is some finite element space, $a(\cdot,\cdot) : V \times V\rightarrow \mathbb{C}$ is a sesquilinear form and $L(\cdot) : V \rightarrow \mathbb{C}$ an antilinear functional. The finite element space $V$ is spanned by basis $\left\{ \varphi_i \right\}_{i=1}^N$ so that every function $u \in V$ admits the representation 
\begin{equation*}
u = \sum_{i=1}^{N} (\vec{x}_u)_i \varphi_i,
\end{equation*}
in which the vector of coefficients $\vec{x}_u \in \mathbb{C}^N$. Problem (\ref{eq:weak}) leads to the linear system
\begin{equation*}
A \vec{x}_u = \vec{b},
\end{equation*}
where $A \in \mathbb{C}^{N\times N}, \vec{b}\in \mathbb{C}^N$, $A_{ij} := a( \varphi_j, \varphi_i)$ and $\vec{b}_i := L(\varphi_i)$. Hence, the sesquilinear form and the matrix $A$ are related as 
\begin{equation}
\label{eq:A_2_WEAK}
a(u,v) := \vec{x}_v^* A \vec{x}_u,
\end{equation} 
where $*$ - is the conjugate transpose. The  properties of the matrix $A$ will depend on the properties of the sesquilinear form and the basis functions via the above equation.

We will describe the actual problem and discretization in detail in Section 4. For now, let us note that when the sesquilinear form $a$ is related to the time-harmonic Helmholtz equation with absorbing boundary conditions, the matrix $A$ can be very large. This is due to the facet that the finite element mesh size has to be sufficiently fine before finite element method can produce accurate results, see \cite{BaIh:95,BaIh:97}. Typical engineering rule of thumb is to use ten degrees of freedom per one wave-length. For example, a cube for which each dimension is ten wave-lengths long requires one to use $10^6$ degrees of freedom, this is, $N = 10^6$ or larger.

In the following, we assume that problem (\ref{eq:weak}) has a unique solution and admits some kind of a stability estimate. Stability estimates are typically derived under additional assumptions on the domain and the  antilinear functional $L$. In general, the functional $L$ can be from the space $V^\prime = \{f :V \rightarrow \mathbb{C} \; | \; \bar{f} \in V^* \; \}$, where $V^*$ is the dual space of $V$. As such functionals can be quite badly behaving, stability estimates are often derived under the assumption $L \in W^\prime$, where $V \subset W$. In this spirit, we make the following assumption. 

\begin{assumption} 
\label{ass1}
Let $W$ be a Hilbert space, $V \subset W$, $L \in W^\prime$ and $u \in V$ be the unique solution to problem (\ref{eq:weak}). Then there exists a constant $C_S> 0$ independent of $u$ and $L$ such that 
\begin{equation}
\label{eq:stab}
\| u \|\leq C_S\|L \|_{W^\prime}
\end{equation}
where $\|\cdot \|$ is a norm on $V$ and $\|\cdot \|_{W^\prime} := \sup \{ \; \left| L(w) \right | \; | \; w \in W \mbox{ and } \|w \|_W = 1 \; \}$.
\end{assumption}

The pseudospectrum of a matrix $A \in \mathbb{C}^{N \times N}$, $\Lambda_\epsilon(A)$,  is a family of sets depending on a parameter $\epsilon > 0$. The sets in the family  are defined as 
\begin{equation*}
\Lambda_\epsilon(A) := \left\{ \; z \in \mathbb{C} \; | \; \left| (zI - A)^{-1} \right| \geq \epsilon^{-1} \right\},
\end{equation*}
in which $|\cdot|$ is the standard spectral norm. When the matrix $(zI-A)$ is singular, we define $| (zI-A)^{-1}| = \infty$. The notation $| \cdot |$ is also used for the Euclidian norm of a vector. Clearly, the pseudospectrum can also be characterized as 
\begin{equation*}
\Lambda_\epsilon(A) := \left\{ \; z \in \mathbb{C} \; | \; \sigma_{min} \right( zI - A \left) \leq \epsilon \right\},
\end{equation*}
in which we denote the smallest singular value of a matrix $B \in \mathbb{C}^{N \times N}$ as $\sigma_{min}(B)$.

The pseudospectrum was independently proposed by several authors as an extension of the spectrum, suitable to study the properties non-normal matrices, \cite{TrEm:2005}. The pseudospectrum has been extensively studied in the literature, see e.g. \cite{Tr:99,Tr:97,TrEm:2005}. In the following, we write $\Lambda_\epsilon(A) = \Lambda_\epsilon$, when the matrix $A$ is clear from the context.

In the derivation of the inclusion region, we take advantage on the relation between FOV and pseudospectrum. The FOV of a matrix $A \in \mathbb{C}^{N \times N}$ is defined as the set 
\begin{equation}
\label{eq:FOV}
FOV(A) := \left\{ \frac{ \vec{x}^*A \vec{x}}{\vec{x}^* \vec{x}} \; | \; \vec{x} \in \mathbb{C}^N \mbox{and } \vec{x}\neq 0 \;  \right\}.
\end{equation}
The set $FOV(A)$  is convex, compact and contains all eigenvalues of $A$. As we will see, coarse inclusion for $FOV(A)$ can be obtained by using it's close relation with the sesquilinear form. We postpone stating the relation between pseudospectrum and FOV to Section 3, where we have introduced sufficient notation for proving it. 

Both  pseudospectrum and FOV can be related to convergence of the GMRES method, \cite{Em:1999,Gr:1997,Sa:2003}.  The approximation error for the solution $\vec{x}_i$ generated by GMRES on step $i$ is measured as $|\vec{r}_i |$, where $\vec{r}_i$ is the residual, $\vec{r}_i = A \vec{x}_i - \vec{b}$. There holds that 
\begin{equation}
\label{eq:GMRES_B1}
| \vec{r}_i | = \inf_{\substack{p \in \tilde{P}_i  \\ p(0)=1} } | p(A) \vec{r}_0 |,
\end{equation}
in which $\tilde{P}_i$ is the space of monic polynomials of degree $i$. The matrix valued polynomial in the above minimization problem can be evaluated using Dunford integral \cite{Yo:1995,Em:1999}. Let the open set $U \subset \mathbb{C}$ be such that  $\sigma(A) \subset U$ and $\partial U$ is the union of rectifiable positively oriented Jordan curves. The set $\sigma(A)$ is the spectrum of $A$. Application of the Dunford integral gives 
\begin{equation}
\label{eq:dunford}
p(A) = \frac{1}{ 2 \pi \mathrm{i} } \int_{\partial U} p( z ) ( z I- A )^{-1}d z.
\end{equation}
This integral can be used to derive estimates for equation (\ref{eq:GMRES_B1}). Let $\tilde{\Lambda}_\epsilon$ satisfy the assumptions made on the set $U$ and in addition let 
\begin{equation*}
\Lambda_\epsilon \subset \tilde{\Lambda}_\epsilon.
\end{equation*}
This is, $|zI - A |\leq \epsilon^{-1} \; \forall \; z \in \partial \tilde{\Lambda}_\epsilon$. In our case, $\tilde{\Lambda}_\epsilon$ is an approximation for the pseudospectral set. Estimating the integral gives 
\begin{equation}
\label{eq:GMRES_PA}
| p(A) \vec{r}_0 | \leq |p(A)|| \vec{r}_0 |\leq \frac{ | \vec{r}_0| | \partial \tilde{\Lambda}_\epsilon|}{2 \pi \epsilon } \sup_{z \in \tilde{\Lambda}_\epsilon}|p(z)|.
\end{equation}
Combining equations (\ref{eq:GMRES_B1}) and (\ref{eq:GMRES_PA}) leads to the GMRES convergence estimate 
\begin{equation}
\label{eq:GMRES_B2}
\frac{ | \vec{r}_i |}{| \vec{r}_0| } =\frac{ | \partial \tilde{\Lambda}_\epsilon|}{2 \pi \epsilon } \inf_{ \substack{p \in \tilde{P}_i  \\ p(0)=1}} \sup_{z \in \tilde{\Lambda}_\epsilon}|p(z)|. 
\end{equation}
As we will illustrate in Section 4, this bound is useful for deriving  worst case behavior of the GMRES convergence rate.

The convergence bound (\ref{eq:GMRES_B2}) is meaningful only if one can solve the complex polynomial minimization problem. Typically, the set $\tilde{\Lambda}_\epsilon$ is replaced with a larger set on which the minimization problem can be solved analytically. In simple cases, $\tilde{\Lambda}_\epsilon$ can replaced with a circular or an elliptical domain, \cite{Sa:2003}. Due to the constraint $p(0)=1$, this approach gives useful information only when the circle or ellipsoid containing $\tilde{\Lambda}_\epsilon$ does not contain the origin. When this is the case, one can try to apply so-called bratwurst shaped domains  \cite{Koch:00}. As the name suggests, a bratwurst shaped domain can curl around the origin and it can be used to derive convergence estimates for the minimization problem. The bratwurst shaped domains can be applied with the inclusion and exclusion regions derived in this paper.  However, the construction given in \cite{Koch:00} is not simple, and cannot yield easy to use a priori bounds.

In order to verify the analytically derived inclusion and exclusion results, we compute examples of the pseudospectral sets. Several different strategies for computing pseudospectrum have been proposed, see \cite{Tr:99} and references therein. Several software packages, such as EigTool, are also freely available \footnote{see the Pseudospectral Gateway, http://www.cs.ox.ac.uk/pseudospectra/}. 

To have full control over the computation of the pseudospectrum, we have chosen to use our own implementation of GRID - approach
to compute pseudospectral sets. In the GRID-approach, a mesh is placed in the complex plane and the norm of the resolvent is computed for each grid point. The computed data is used to isolines describing the set.  In the simplest case, the norm is computed as the largest singular value of the matrix $(z-A)^{-1}$. Clearly, such an approach is very expensive for large number of points and large matrices. The process can be sped up by adapting the computational grid to the resolvent norm or by using a suitable matrix factorization to speed up the evaluation of the largest singular value. We have opted to speed up the computation by using an adaptive strategy to refine the computational grid. An initial triangular grid is placed in the complex plane. The grid is iteratively refined to conform to the shape of the resolvent norm. We use a refinement strategy based on splitting triangles intersecting with pre-specified level sets of the resolvent norm. This guarantees higher resolution at interesting regions of the complex plane.

\section{Abstract Framework}

In this section,we derive inclusion and exclusion regions for the pseudospectral set. For this purpose, it is easier to bound the complement of $\Lambda_\epsilon$, i.e.
\begin{equation}
\label{eq:cps}
\Lambda^c_\epsilon := \left\{ \; \left| (zI - A)^{-1} \right| < \epsilon^{-1} \right\}.
\end{equation}
The inclusion and exclusion regions will lead to a set containing the pseudospectrum. If the boundary of this set is a rectifiable Jordan curve, it can be used in connection with equation (\ref{eq:GMRES_B2}) to compute convergence estimates for the GMRES method. The exclusion will be a disc around the origin. For the results to be meaningful, the exclusion should not be fully contained in the inclusion.  If this is the case, the polynomial minimization problem in equation (\ref{eq:GMRES_B2}) does not tend to zero  and the bound does not provide useful information. This has to be studied separately for each problem.

When $(zI-A)$ is non-singular, the matrix norm in equation (\ref{eq:cps}) is defined as
\begin{equation}
\label{eq:norm_def}
| (zI - A)^{-1}| := \sup_{u \in V} \frac{|(zI-A)^{-1}\vec{x}_u |}{|\vec{x}_u|}.
\end{equation}
To eliminate the inverse and to establish a connection to the weak problem, we define an auxiliary vector $\vec{x}_v \in \mathbb{C}^N$ such that 
\begin{equation}
\label{eq:aux_def}
(zI-A) \vec{x}_v = \vec{x}_u.
\end{equation}
Estimates for the resolvent norm are derived using the auxiliary variable. First, we establish the stability bound $|\vec{x}_v|\leq f(z) | \vec{x}_u|$. When $f(z)$ is bounded from above, this implies that $(zI - A)$ is non-singular. In this case, the auxiliary vector is uniquely defined and 
\begin{equation*}
 \vec{x}_v = (zI-A)^{-1}\vec{x}_u.
\end{equation*}
The norm (\ref{eq:norm_def}) can be estimated using the stability estimate for $\vec{x}_v$ as
\begin{equation}
\label{eq:norm_aux}
| (zI - A)^{-1}| = \sup_{u \in V} \frac{|\vec{x}_v |}{|\vec{x}_u|} \leq f(z). 
\end{equation}
 We begin by taking advantage of the stability of the weak problem, Assumption \ref{ass1}. Due to the duality between coefficient vectors and functions,  stability of the weak problem implies stability of the linear system. As all finite dimensional norms are equal, there exists positive constants $\alpha, \alpha_W > 0$ independent of $u$ such that 
\begin{equation}
\label{ass2}
\alpha | \vec{x}_u| \leq \| u \|  \quad \mbox{ and } \quad \alpha_W | \vec{x}_u| \leq  \| u \|_W  \quad \forall u \in V.
\end{equation}
When the derived framework is applied to a specific problem, $\alpha$ and $\alpha_W$  are typically dependent on the mesh size. The dependency of these constants on relevant problem parameters are discussed in Section 4. Combining these norm equivalences with Assumption \ref{ass1} leads to the following corollary. 
\begin{corollary} 
\label{co:stab}
 Let $\vec{b}\in \mathbb{C}^n$ and  $\vec{x}_u$ be such that $A \vec{x}_u = \vec{b}$. Then there holds that 
\begin{equation*}
| \vec{x}_u |\leq C_{2S} |\vec{b}|.
\end{equation*}
Where $C_{2S} := C_S (\alpha_W \alpha )^{-1}$ . 
\end{corollary}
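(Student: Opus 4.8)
The plan is to read the algebraic identity $A\vec{x}_u = \vec{b}$ backwards into the weak problem (\ref{eq:weak}), invoke the stability estimate of Assumption \ref{ass1} for the function associated with $\vec{x}_u$, and then transfer the resulting $V$-norm bound back to the coefficient vector by means of the norm equivalences (\ref{ass2}). So the proof is essentially a chain: coefficient vector $\to$ finite element function $\to$ weak solution $\to$ stability $\to$ back to coefficient vector.

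First I would set $u = \sum_i (\vec{x}_u)_i \varphi_i \in V$ and define the antilinear functional $L$ on $V$ by $L(v) := \vec{x}_v^*\vec{b}$. By (\ref{eq:A_2_WEAK}) and the hypothesis $A\vec{x}_u = \vec{b}$, we have $a(u,v) = \vec{x}_v^* A \vec{x}_u = \vec{x}_v^*\vec{b} = L(v)$ for every $v \in V$, so $u$ is the (unique) solution of problem (\ref{eq:weak}) with right-hand side $L$. Next I would control the dual norm of $L$: by the Cauchy--Schwarz inequality and the second estimate in (\ref{ass2}),
\begin{equation*}
|L(v)| = |\vec{x}_v^*\vec{b}| \leq |\vec{x}_v|\,|\vec{b}| \leq \alpha_W^{-1}\|v\|_W\,|\vec{b}| \qquad \forall v \in V,
\end{equation*}
so $L$ is bounded on $V$ with respect to $\|\cdot\|_W$. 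Since $W$ is a Hilbert space, $L$ extends to a functional in $W'$ that agrees with $L$ on $V$ and satisfies $\|L\|_{W'} \leq \alpha_W^{-1}|\vec{b}|$; passing to this extension does not alter the solution $u$ of (\ref{eq:weak}), as the test functions still range only over $V$. Applying Assumption \ref{ass1} then gives $\|u\| \leq C_S\|L\|_{W'} \leq C_S\alpha_W^{-1}|\vec{b}|$, and the first estimate in (\ref{ass2}), namely $\alpha|\vec{x}_u| \leq \|u\|$, finally yields $|\vec{x}_u| \leq \alpha^{-1}\|u\| \leq C_S(\alpha_W\alpha)^{-1}|\vec{b}| = C_{2S}|\vec{b}|$.

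The only genuinely delicate point is the applicability of Assumption \ref{ass1}: the functional produced by $\vec{b}$ lives a priori only on $V$, whereas the assumption presupposes a functional in $W'$, and its norm must be measured by testing against all of $W$. The extension argument above is what bridges this gap, and it is precisely the place where the constant $\alpha_W$ (rather than $\alpha$) enters, since the test function in $L(v) = \vec{x}_v^*\vec{b}$ is being estimated in the $W$-norm. Everything else in the argument is a routine application of the equivalence of norms on $\mathbb{C}^N$.
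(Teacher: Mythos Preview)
Your proof is correct and follows the same overall scheme as the paper: translate $A\vec{x}_u=\vec{b}$ into the weak problem, bound the dual norm of the associated functional by $\alpha_W^{-1}|\vec{b}|$, apply Assumption~\ref{ass1}, and return to the Euclidean norm via $\alpha|\vec{x}_u|\le\|u\|$.

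The only technical difference is in how the functional is promoted from $V$ to $W'$. You invoke an abstract extension (Hahn--Banach, or extension by zero on the $W$-orthogonal complement of $\overline{V}$). The paper instead constructs the extension explicitly: it takes the Riesz representative $q\in V$ of $v\mapsto \vec{x}_v^*\vec{b}$ with respect to the $W$-inner product restricted to $V$, i.e.\ $(q,v)_W=\vec{x}_v^*\vec{b}$ for all $v\in V$, and then sets $L(w):=(q,w)_W$ for $w\in W$. This is a concrete finite-dimensional construction that yields the same bound $\|L\|_{W'}\le\|q\|_W\le\alpha_W^{-1}|\vec{b}|$ without appealing to an extension theorem. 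Either device works; the paper's is slightly more self-contained, yours is shorter.
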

\begin{proof} Let $q \in V$ be such that 
\begin{equation*}
(q,v)_W = \vec{x}_v^* \vec{b} \quad \forall v \in V,
\end{equation*}
where $(\cdot, \cdot)_W$ is inner product on $W$. Using Cauchy-Schwarz inequality and the norm equivalence given in equation (\ref{ass2}) there holds that $\| q \|_W \leq \alpha_W^{-1} | \vec{b} |$. Via this construction, vector $\vec{b}$ defines an antilinear functional on $W^\prime$ as $L(v) := (q,v)_W$.  By the definition of the dual norm and Cauchy-Schwarz inequality 
\begin{equation}
\label{eq:load_eq}
\| L \|_{W^\prime} = \sup_{w \in W}\frac{ \left | (w,q)_W \right| }{\| w \|_W} \leq \sup_{w \in W}\frac{ \|w\|_W\|q\|_W  }{\| w \|_W} = \|q  \|_W.
\end{equation}
It follows that 
\begin{equation*}
\|L \|_{W^\prime}\leq \alpha_W^{-1} |\vec{b}|. 
\end{equation*}
Combining the above equation with Assumption \ref{ass1} and equation (\ref{ass2}), we obtain
\begin{equation*}
\alpha_W \alpha | \vec{x}_u |\leq C_S |\vec{b}|.
\end{equation*}
\end{proof}

The above Corollary essentially gives a lower bound for the smallest singular value of $A$. There holds that 
\begin{equation*}
\sigma_{min}(A)^{-1} = \min_{\vec{x}\in \mathbb{C}^N} \frac{|A^{-1}\vec{x}|}{ | \vec{x} |}
\end{equation*}
so, that $C_{2S}^{-1} \leq \sigma_{min}(A)$. Corollary \ref{co:stab} can be used to derive exclusion region near the origin. We give here a direct proof that fits well to the framework of the paper. Same result can be established from the lower bound for the smallest singular value by using Theorem 3 from \cite{Ko:01}.
\begin{theorem}
\label{th:exclusion}
Let Assumption \ref{ass1} hold and let $C_{2S}$ be as defined in Corollary \ref{co:stab}. Then there holds that 
\begin{equation*}
B(0, \frac{1}{ C_{2S} }- \epsilon) \subset \Lambda_\epsilon^c,
\end{equation*}
in which $B(z_0,r) := \left \{\; z\in \mathbb{C} \: | \; |z-z_0| < r \; \right\}$.

\end{theorem}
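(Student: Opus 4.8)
The plan is to run the auxiliary-variable argument set up in equations (\ref{eq:aux_def})--(\ref{eq:norm_aux}), feeding in the linear-system stability bound of Corollary \ref{co:stab}. First I would fix $z\in\mathbb{C}$ with $|z|<C_{2S}^{-1}-\epsilon$; if no such $z$ exists the ball is empty and the statement is vacuous, so assume $C_{2S}^{-1}-\epsilon>0$. For arbitrary $u\in V$ introduce $\vec{x}_v$ with $(zI-A)\vec{x}_v=\vec{x}_u$, rewrite this as $A\vec{x}_v=z\vec{x}_v-\vec{x}_u$, and apply Corollary \ref{co:stab} to get $|\vec{x}_v|\leq C_{2S}\,|z\vec{x}_v-\vec{x}_u|\leq C_{2S}\bigl(|z|\,|\vec{x}_v|+|\vec{x}_u|\bigr)$.

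Since $|z|<C_{2S}^{-1}$ we have $1-C_{2S}|z|>0$, so rearranging yields the stability estimate $|\vec{x}_v|\leq f(z)\,|\vec{x}_u|$ with $f(z)=\bigl(C_{2S}^{-1}-|z|\bigr)^{-1}$. This bound also shows $(zI-A)$ is non-singular: a nonzero kernel vector would force $\vec{x}_u=0$ and then $|\vec{x}_v|\leq f(z)\cdot 0=0$, a contradiction. Hence $\vec{x}_v=(zI-A)^{-1}\vec{x}_u$, and taking the supremum over $u$ as in (\ref{eq:norm_aux}) gives $|(zI-A)^{-1}|\leq f(z)$.

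To finish, observe that the hypothesis $|z|<C_{2S}^{-1}-\epsilon$ is exactly $C_{2S}^{-1}-|z|>\epsilon$, hence $f(z)=\bigl(C_{2S}^{-1}-|z|\bigr)^{-1}<\epsilon^{-1}$. Therefore $|(zI-A)^{-1}|<\epsilon^{-1}$, i.e.\ $z\in\Lambda_\epsilon^c$ by the definition (\ref{eq:cps}). As $z$ was an arbitrary point of $B(0,C_{2S}^{-1}-\epsilon)$, we conclude $B(0,C_{2S}^{-1}-\epsilon)\subset\Lambda_\epsilon^c$.

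I do not expect a real obstacle here; the argument is essentially the resolvent estimate $\sigma_{min}(zI-A)\geq\sigma_{min}(A)-|z|$ rephrased to stay inside the weak-problem framework. The only points needing a little care are the degenerate case of a nonpositive radius and the fact that $\vec{x}_v$ must be shown to be the genuine image under $(zI-A)^{-1}$ rather than assumed invertible from the outset — both of which are handled by the stability bound itself.
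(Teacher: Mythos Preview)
Your proof is correct and follows essentially the same route as the paper: rewrite the auxiliary equation as $A\vec{x}_v=z\vec{x}_v-\vec{x}_u$, apply Corollary~\ref{co:stab}, rearrange to the resolvent bound $|(zI-A)^{-1}|\leq C_{2S}/(1-C_{2S}|z|)=(C_{2S}^{-1}-|z|)^{-1}$, and read off the radius. Your added remarks on the empty-ball case and the non-singularity argument are sound refinements of the same argument.
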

\begin{proof} From the definition of the auxiliary variable (\ref{eq:aux_def}) it follows that 
\begin{equation*}
A \vec{x}_v = z \vec{x}_v - \vec{x}_u 
\end{equation*}
Application of Corollary \ref{co:stab} gives
\begin{equation*}
|\vec{x}_v | \leq C_{2S} \left( |z|  |\vec{x}_v | + | \vec{x}_u | \right)
\end{equation*}
i.e.
\begin{equation}
\label{eq:inclusion_bound}
|\vec{x}_v | \leq \frac{ C_{2S}  }{ 1 - C_{2S}  |  z | } | \vec{x}_u |.
\end{equation}
When $|z| < C_{2S}^{-1}$, the above bound implies that $(zI-A)$ is non-singular. In this case, combining equations (\ref{eq:inclusion_bound}) and (\ref{eq:norm_aux}) gives 
\begin{equation*}
|(zI - A )^{-1}|\leq  \frac{ C_{2S}  }{ 1 - C_{2S}  |  z | }.
\end{equation*}
To obtain the exclusion region, we set  
\begin{equation*}
\frac{ C_{2S}  }{ 1 - C_{2S}  |  z | } < \epsilon^{-1},
\end{equation*}
which gives the bound
\begin{equation*}
|z| < \frac{1}{C_{2S}}- \epsilon.
\end{equation*}
\end{proof}

The inclusion is obtained by relating pseudospectrum to FOV. The following Theorem is proven e.g. in,  \cite{Tr:97}. For completeness, we give a proof using the notation used in this Section. 
\begin{theorem} 
\label{th:FOV_2_PS} Let $S_\epsilon := \left \{ \; z \in \mathbb{C} \; | \; \mbox{dist}(z,FOV(A) ) \leq \epsilon \; \right\}$ in which
\begin{equation*}
\mbox{dist}(z, Q) := \inf_{ q \in Q } |z - q |. 
\end{equation*}
Then there holds that $\Lambda_\epsilon \subset S_\epsilon$.
\end{theorem}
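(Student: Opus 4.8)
The plan is to show the contrapositive at the level of the complement, namely that if $z \notin S_\epsilon$ then $z \notin \Lambda_\epsilon$, i.e. $|(zI-A)^{-1}| < \epsilon^{-1}$. The hypothesis $z \notin S_\epsilon$ says $\mathrm{dist}(z, FOV(A)) > \epsilon$, so let $\delta := \mathrm{dist}(z, FOV(A)) > \epsilon$. The key elementary fact I would invoke is that for any $w \in FOV(A)$ we have $|z - w| \geq \delta$, and since $FOV(A)$ contains exactly the Rayleigh quotients $\vec{x}^*A\vec{x}/\vec{x}^*\vec{x}$, this translates into the statement that for every nonzero $\vec{x} \in \mathbb{C}^N$,
\begin{equation*}
\left| \vec{x}^*(zI - A)\vec{x} \right| = \left| z\,\vec{x}^*\vec{x} - \vec{x}^*A\vec{x} \right| \geq \delta\, |\vec{x}|^2.
\end{equation*}
In particular $FOV(zI-A) = z - FOV(A)$ stays at distance at least $\delta$ from the origin.

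Next I would convert this field-of-values lower bound into a lower bound on the smallest singular value of $zI - A$. Applying Cauchy--Schwarz to the left side gives $\delta |\vec{x}|^2 \leq |\vec{x}^*(zI-A)\vec{x}| \leq |\vec{x}|\,|(zI-A)\vec{x}|$, hence $|(zI-A)\vec{x}| \geq \delta |\vec{x}|$ for all $\vec{x}$. This shows $zI - A$ is injective, therefore invertible, and that $\sigma_{min}(zI - A) \geq \delta$. Equivalently, writing $\vec{x} = (zI-A)^{-1}\vec{y}$, we get $|(zI-A)^{-1}\vec{y}| \leq \delta^{-1}|\vec{y}|$ for all $\vec{y}$, so $|(zI-A)^{-1}| \leq \delta^{-1} < \epsilon^{-1}$ since $\delta > \epsilon$. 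This is precisely $z \in \Lambda_\epsilon^c$, completing the argument; to fit the notation of this section one can phrase it via the auxiliary vector of equation (\ref{eq:aux_def}), taking $f(z) = \delta^{-1}$ in equation (\ref{eq:norm_aux}).

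The argument is short and the only step requiring a little care is the passage from $w \in FOV(A) \Rightarrow |z-w| \geq \delta$ to the quadratic-form inequality: one must remember to clear the normalization $\vec{x}^*\vec{x}$ and handle the $\vec{x} = 0$ case trivially, and one should note that $FOV(A)$ is closed (compact), so the infimum defining $\delta$ is attained and the strict inequality $\delta > \epsilon$ is genuinely available from $z \notin S_\epsilon$ (where $S_\epsilon$ is defined with a non-strict inequality). No real obstacle is expected; the result is essentially a restatement of the standard fact $\sigma_{min}(B) \geq \mathrm{dist}(0, FOV(B))$ applied to $B = zI - A$.
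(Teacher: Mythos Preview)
Your proof is correct and follows essentially the same route as the paper: both arguments use Cauchy--Schwarz on $\vec{x}^*(zI-A)\vec{x}$ to obtain $|(zI-A)\vec{x}|\geq \mathrm{dist}(z,FOV(A))\,|\vec{x}|$, which yields $|(zI-A)^{-1}|\leq \mathrm{dist}(z,FOV(A))^{-1}$. The paper phrases this via the auxiliary vector $\vec{x}_v$ of equation~(\ref{eq:aux_def}) rather than your contrapositive framing, but as you already note, the two presentations are interchangeable.
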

\begin{proof} The auxiliary variable is defined as 
\begin{equation*}
(A - zI ) \vec{x_v} = \vec{x}_u.
\end{equation*}
\noindent Testing the above equation with $\vec{x}_v$ gives
\begin{equation*}
\vec{x}_v^*A \vec{x}_v - z \vec{x}_v^* \vec{x}_v = \vec{x}_v^* \vec{x}_u.
\end{equation*}
Using Cauchy-Schwarz inequality gives
\begin{equation*}
| \vec{x}_v | | \vec{x}_u | \geq |\vec{x}_v^*A \vec{x}_v - z \vec{x}_v^* \vec{x}_v |  = \vec{x}_v^*\vec{x}_v \left|\frac{\vec{x}_v^*A \vec{x}_v}{\vec{x}_v^*\vec{x}_v} - z  \right|.
\end{equation*}
This is, 
\begin{equation*}
|\vec{x}_v| \left|\frac{\vec{x}_v^*A \vec{x}_v}{\vec{x}_v^*\vec{x}_v} - z  \right| \leq |\vec{x}_u|.
\end{equation*}
By the definition of FOV(A) in equation (\ref{eq:FOV}) there holds that 
\begin{equation*}
| \vec{x}_u | \geq \mbox{dist}(z,FOV(A) ) | \vec{x}_v |. 
\end{equation*}
\end{proof}

Theorem \ref{th:FOV_2_PS} gives tools for deriving an inclusion for the pseudospectrum. The FOV is directly related to the boundedness properties of the sesquilinear form of the original problem. This relation arises from the connection $\vec{x}_v^* A \vec{x}_v = a(v,v)$. The simplest  estimate follows from boundedness of the sesquilinear form. Assume that there exists $C>0$ such that
\begin{equation*}
| a(u,u) | < C \|u \|^2_V \quad \forall u \in V.  
\end{equation*}
Then there holds that 
\begin{equation*}
FOV(A) \subset B(0,C).
\end{equation*} 
This is a very crude estimate, but it demonstrates how FOV can be bounded in simple cases. However, as we will see, more refined estimates are required to avoid inclusion of zero to the approximate pseudospectrum.

\section{Examples} 

In this section, we demonstrate the presented theory with three examples. In all examples, we assume that $\Omega \subset \mathbb{R}^d,d=2,3$ is a bounded domain with Lipschitz continuous boundary. We use standard notation for Sobolev spaces, see \cite{Br:2007}. 

The finite element space $V$ is defined as 
\begin{equation}
V := \{u \in H^1(\Omega) \; | \; u \in P^1(K) \quad \forall K \in \mathcal{T} \; \},
\end{equation}
where $\mathcal{T}$ is a shape regular triangular or tetrahedral partition of $\Omega$, \cite{Br:2007}. This is, $V$ is the space of first order Lagrange finite elements. The space of first order polynomials over set $K$ is denoted by $P^1(K)$ and the mesh-size by $h$ , respectively.

The presented theoretical results are independent of the domain, but the actual numerical examples are computed on $\Omega = (-1,1)^2 \setminus (0,1)^2$. The meshes used in the tests are generated from a coarse mesh with approximately 100 nodes using uniform refinement. The coarse mesh is called level one mesh, once refined coarse mesh as a level two mesh and so on. 

Throughout this Section, $c,C > 0$ are generic positive constants independent of mesh size $h$, solution, load, and parameters of the weak problem, if not otherwise stated. They may depend on  the shape regularity constant of the partition $\mathcal{T}$ and the domain $\Omega$. 

\subsection{Poisson equation} We begin by considering the finite element discretization of the Poisson equation: Find $u \in V_0$ such that 
\begin{equation}
\label{eq:weak_poisson}
(\nabla u, \nabla v) = (f,v) \quad \forall v \in V_0. 
\end{equation}
In which $V_0 = V \cap H^1_0(\Omega)$ and $f \in L^2(\Omega)$. This is
\begin{equation*}
a(u,v) :=  (\nabla u, \nabla v)  \mbox{  and  } L(v) := (f,v)
\end{equation*}
so that $L \in (L^2(\Omega))^\prime$. We use the standard $H^1$-norm
\begin{equation*}
\| u \|_1^2 := (\nabla u, \nabla u) +(u,u)
\end{equation*}
for the space $V_0$. 

It is straightforward  to see that the matrix $A$ related to problem (\ref{eq:weak_poisson}) is symmetric and positive definite, \cite{Br:2007}. The convergence of iterative methods for such linear systems can be analyzed using much easier techniques than pseudospectrum. However, such a simple example is useful for demonstrating what kind of information on GMRES convergence can be obtained based on the inclusion and exclusion results. 

Pseudospectrum of a normal matrix can be easily computed from it's eigenvalues. All normal matrices are unitary diagonalizable, hence there exists a diagonal $D \in \mathbb{C}^{N \times N}$and a unitary $Q \in \mathbb{C}^{N \times N}$ such that $A = Q^* D Q$. Based on this expansion, there holds that 
\begin{equation*}
\left| (zI -A)^{-1}\right | = \left | (z -D)^{-1}\right | = \max_{\lambda \in \sigma(A)} | (z - \lambda)^{-1} |.
\end{equation*}
Thus, pseudospectrum of any normal matrix is a union of discs centered around it's eigenvalues $\lambda_i$, 
\begin{equation*}
\Lambda_\epsilon = \cup_{i=1}^N B( \lambda_i, \epsilon ).
\end{equation*}
The pseudospectrum for level one mesh is visualized in Figure \ref{fig:ps_poisson} for different values of $\epsilon$. 

\begin{figure}[ht]
\label{fig:ps_poisson}
\includegraphics[scale=0.6]{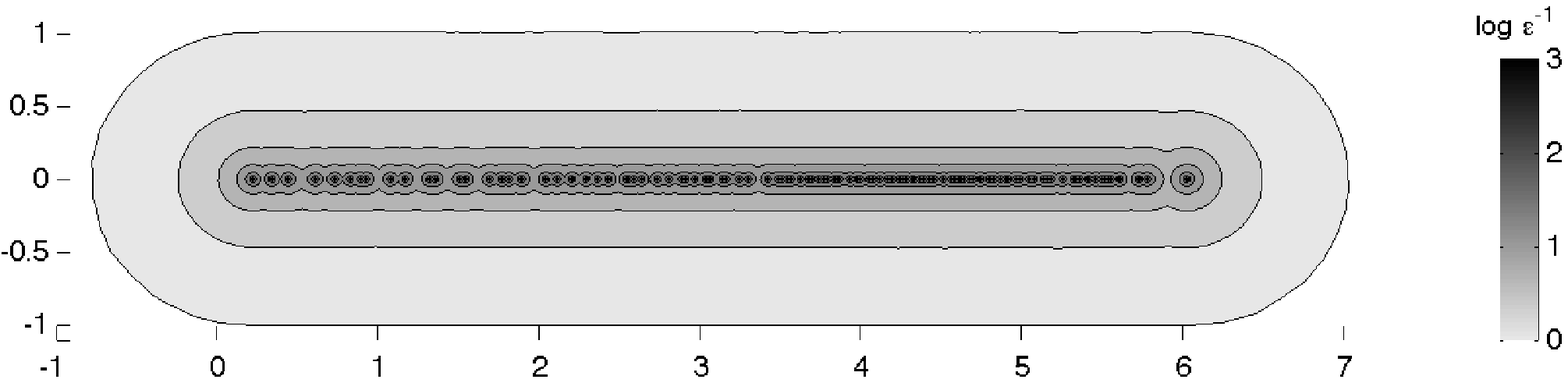} \\
\includegraphics[scale=0.6]{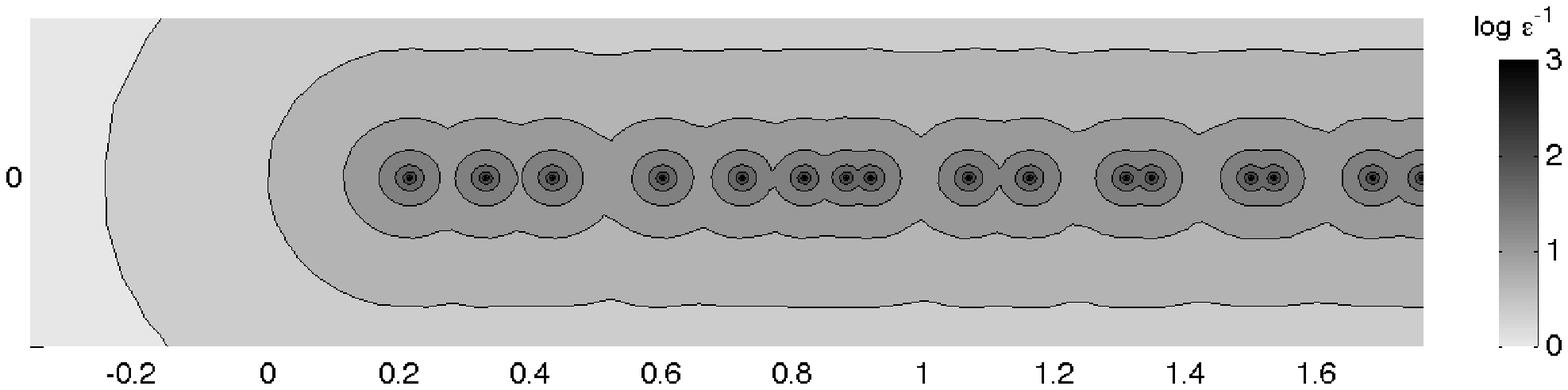}
\caption{Pseudospectral set for the Poisson problem on level one mesh. For sufficiently small $\epsilon$, the set is composed of disjoint disks with radius $\epsilon$.}
\end{figure}

Next, we derive inclusion and exclusion regions using Theorem \ref{th:exclusion} and \ref{th:FOV_2_PS}. First, we need to establish a stability estimate satisfying Assumption \ref{ass1}. As we are interested in mesh size explicit bounds, we use $h$-explicit norm equivalences instead of equation (\ref{ass2}). For the Poisson problem, stability estimate follows from the weak problem (\ref{eq:weak_poisson}) by using Poincare-Friedrichs inequality. Let $u \in V_0$ be the solution to (\ref{eq:weak_poisson}) then there exists a constant $C>0$ such that
\begin{equation*}
\| u \|_1 \leq C \| f \|_0.
\end{equation*}
Following this stability estimate, we choose the space $W$ as $L^2(\Omega)$ and $\| \cdot \|_W = \|\cdot \|_0$. Norm equivalences between $H^1(\Omega)$-, $L^2(\Omega)$- and the Euclidian norm can be derived in the finite element space $V$ using the scaling argument and inverse inequality, \cite{QuVa:94}. There exists $c$ and $C$  such that
\begin{equation}
\label{eq:L2_2_EUCLIDIAN}
c h^{d/2}|\vec{x}_u | \leq \|u \|_0 \leq C h^{d/2} | \vec{x}_u | \quad \forall u \in V
\end{equation}
and
\begin{equation}
\label{eq:H1_2_EUCLIDIAN}
c h^{d/2}|\vec{x}_u | \leq \|u \|_1 \leq  C h^{d/2-1}  | \vec{x}_u | \quad \forall u \in V.
\end{equation}

Now, we can apply Corollary \ref{co:stab} to derive a stability constant for the linear system arising from the weak problem (\ref{eq:weak_poisson}). Let $\vec{x}_u$ be such that $A \vec{x}_u = \vec{b}$. Then by Corollary \ref{co:stab} and the $h$-explicit norm equivalences, there  exists a constant $C$ such that  
\begin{equation*}
| \vec{x}_u | \leq C h^{-d}| \vec{b}|.
\end{equation*}
Application of Theorem \ref{th:exclusion} gives the following exclusion near the origin, 
\begin{equation*}
B(0, Ch^{d} - \epsilon )\subset \Lambda_\epsilon^c. 
\end{equation*}
We proceed by deriving an inclusion for $FOV(A)$, which together with Theorem \ref{th:FOV_2_PS} gives inclusion for $\Lambda_\epsilon$. It is easy to derive the estimates
\begin{equation*}
\Im{a(u,u)} = \Im \|\nabla u \|_0^2 = 0 \quad \forall u \in V_0
\end{equation*}
and 
\begin{equation*}
c h^{d}  | \vec{x}_u |^2 \leq \Re a(u,u) < C h^{d-2} |\vec{x}_u |^2 \quad \forall u \in V_0.
\end{equation*}
So that $FOV(A) \subset \left \{\; x \in \mathbb{R} \; | \;  c h^{d}   < x <  C h^{d-2} \; \right\}$. An application of Theorem \ref{th:FOV_2_PS} gives the inclusion $\Lambda_\epsilon \subset \tilde{S}_\epsilon$, in which
\begin{equation*}
\tilde{S}_\epsilon := \{ \; z \in \mathbb{C} \; | \; \mbox{dist} \left( z, \left \{\; x \in \mathbb{R} \; | \;  c h^{d}   < x <  C h^{d-2} \; \right\} \right) \leq \epsilon \; \}.
\end{equation*}

The above inclusion and exclusion regions give us an approximation of pseudospectrum $\tilde{\Lambda}_\epsilon$,  
\begin{equation*}
\tilde{\Lambda}_\epsilon := \tilde{S}_\epsilon \setminus B(0, C_1 h^{d} - \epsilon ).
\end{equation*}
Where the constant $C_1>0$ is independent of $h$ and $\epsilon$ . To exclude the origin from this approximate pseudospectrum, we have to choose the parameter $\epsilon$ as $\epsilon \leq C_1 h^{d} $. In this case, the length of the boundary curve around the approximate pseudospectrum  satisfies $| \partial \tilde{\Lambda}_\epsilon | \leq C_2 h^{d-2}$ for some $C_2>0$  independent of $h$ and $\epsilon$.  When combined with equation (\ref{eq:GMRES_B2}) approximate pseudospectrum gives the GMRES convergence bound
\begin{equation}
\label{eq:GMRES_poisson1}
| \vec{r}_i| \leq  \frac{C_2 h^{d-2}}{2 \pi \epsilon}  \inf_{ \substack{p \in \tilde{P}_i  \\ p(0)=1}} \sup_{z \in \tilde{\Lambda_\epsilon} } |p (z) |  | \vec{r}_0 | \quad \forall \epsilon \leq C_1 h^d
\end{equation}
The set $\tilde{\Lambda}_\epsilon$ can be covered either with an ellipsoid or a circle and the minimization problem can be solved using estimates given in \cite{Gr:1997,Sa:2003}. There holds that 
\begin{equation*}
\inf_{ \substack{p \in \tilde{P}_i  \\ p(0)=1}} \sup_{z \in B(c,r) } |p (z) | \leq \left( \frac{r}{|c|} \right)^{i}
\end{equation*}
Although the estimate could be optimized with respect to parameter $\epsilon$, we have chosen $\epsilon = 0.5 C_1 h^{d}$, which gives correct asymptotic behavior with respect to $h$. Using this $\epsilon$ and  $c = C_2 h^{d-2}$, the circle based bound leads to the estimate 
\begin{equation*}
| \vec{r}_i| \leq \frac{  C_2 h^{-2} }{\pi C_1 } \left( \frac{ 1 }{ 1 + \frac{C_1}{2 C_2} h^{2}} \right)^i | \vec{r}_0 |
\end{equation*}
When the termination criteria for GMRES is chosen such that the relative residual satisfies $|\vec{r}_i | |\vec{r}_0|^{-1} \leq tol$, the above estimate gives the required number of iterations $N$ as
\begin{equation}
\label{eq:N_poisson1}
N \approx - \frac{2 C_2}{ C_1} h^{-2} \left( \log{ tol }  -  \log{ \frac{  C_2 h^{-2} }{\pi C_1} } \right)
\end{equation}

Our approximate pseudospectrum cannot capture the behavior of $\Lambda_\epsilon$ for very small values of $\epsilon$. For example in the current case, the exact pseudospectrum is composed of small discs  with boundary length $2 \pi \epsilon$. Let $\epsilon$ be such that the discs generating the pseudospectrum do not intersect. Any finite union of disjoint disks satisfies the conditions placed on the set $U$ in the Dunford integral. Using equation (\ref{eq:dunford}) we obtain the estimate
\begin{equation*}
|p(A)| \leq \frac{1}{2 \pi \epsilon} \sum_{i=1\ldots N_0}  | \partial B(\epsilon, \lambda_i) | \sup_{ z \in B(\epsilon, \lambda_i) }|p(z) |
\leq  N_0 \sup_{ z \in \Lambda_\epsilon }|p(z) |,
\end{equation*}
which is valid for sufficiently small $\epsilon$. Here $N_0$ is the number of disjoint eigenvalues of $A$. For quasi-uniform meshes, there exists $C$ such that $N_0 \leq C h^{-d}$ so that 
\begin{equation*}
|p(A)| \leq  C h^{-d} \sup_{ z \in \Lambda_\epsilon }|p(z) |. 
\end{equation*}
Combining the above estimate with equation (\ref{eq:GMRES_B1}) gives 
\begin{equation}
\label{eq:refined_estimate_poisson}
\frac{| \vec{r}_i |}{ | \vec{r}_0|} \leq C h^{-d} \inf_{ \substack{p \in \tilde{P}_i  \\ p(0)=1}} \sup_{ z \in \Lambda_\epsilon }|p(z) |.
\end{equation}
This estimate based on the exact set $\Lambda_\epsilon$  has a different multiplicative term in comparison to equation (\ref{eq:GMRES_poisson1}). Interestinly, for $d=1$, multiplicative term is smaller, for $d=2$ it is equivalent and for $d=3$ bigger. Regardless of the  multiplicative constant, the estimate (\ref{eq:refined_estimate_poisson}) can deliver improved convergence number estimates. The best possible bound can be obtained at the limit $\epsilon = 0$, when the minimization problem can be solved using Chebychev polynomials, see e.g. \cite{Sa:2003}. Based on the FOV, the condition number $\kappa(A) \leq Ch^{-2}$. We obtain an estimate for the number of iterations
\begin{equation}
\label{eq:N_poisson2}
N \approx - C h^{-1} ( \log(tol) - \log(Ch^{-d}))
\end{equation} 
 
The main difference between the estimates (\ref{eq:N_poisson1}) and (\ref{eq:N_poisson2}) is in in the power of the mesh size $h$. For the particular problem, this difference is due to the fact, that the set $\tilde{\Lambda}_\epsilon$ cannot capture the behaviour of the pseudospectrum for small $\epsilon$. For complicated problems, such knowledge is very difficult to come by and one has to be satisfied with worst case estimates, such as equation (\ref{eq:N_poisson1}). The second difference between the two estimates is in the additive terms. These additive terms are relevant only when tolerance is of the same order of magnitude with $Ch^{-d}$, which requires usage of very fine mesh sizes 

\subsection{ Helmholtz equation with absorbing boundary conditions}

The Helmholtz equation with first-order absorbing boundary conditions is a more realistic example for the analysis presented in this paper. The weak problem reads: Find $u \in H^1(\Omega)$ such that 
\begin{equation}
\label{eq:weak_abs}
a(u,v) = L(v) \quad \forall v \in H^1(\Omega).
\end{equation}
in which 
\begin{equation}
\label{eq:abs_forms}
a(u,v) := (\nabla u, \nabla v) + \mathrm{i} \kappa \left( u,v \right)_{\partial \Omega} - \kappa^2 (u,v ) \mbox{ and } L(v) := (f,v) + \left( g,v \right)_{\partial \Omega}.
\end{equation}
The parameter $\kappa \in \mathbb{R}, \kappa >0$, $f \in L^2(\Omega)$ and $g \in L^2(\partial \Omega)$. The inner product $(\cdot,\cdot)_{\partial \Omega}$ is the standard $L^2$-inner product over $\partial\Omega$. The stability of this problem has been analyzed in domains excluding any resonant behavior, \cite{Me:95}. 
\begin{theorem} 
Let $\Omega$ be a bounded, star shaped domain with a smooth boundary and let $u \in H^1(\Omega)$ be the solution to problem (\ref{eq:weak_abs}). Then there exists a constant $C_S > 0$ independent of $u$,$f$,$g$ and $\kappa$ such that 
\label{th:stab_abs}
\begin{equation*}
\|u \|_\kappa \leq C_S \left( \|f \|_0 + \| g \|_{0,\partial \Omega} \right),
\end{equation*} 
in which the norm $\| \cdot \|_\kappa$ is defined as 
\begin{equation}
\label{eq:kappa_norm}
\| u \|_\kappa^2 := \|\nabla u \|_0^2 + \kappa^2 \|u \|_0^2.
\end{equation}

\end{theorem}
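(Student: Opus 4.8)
The plan is to use the classical Rellich--Morawetz multiplier argument of \cite{Me:95}, which on a star-shaped domain turns the sign-indefinite form $a$ into a coercive one with the dependence on $\kappa$ made explicit. Since $\partial\Omega$ is smooth, elliptic regularity (supplemented, since $g$ lies only in $L^2(\partial\Omega)$, by a routine density argument) gives $u$ enough smoothness to justify the integrations by parts below, and $u$ satisfies $-\Delta u - \kappa^2 u = f$ in $\Omega$ together with $\partial_{\bm n} u + \mathrm{i}\kappa u = g$ on $\partial\Omega$, where $\bm n$ is the outward unit normal. I would first test (\ref{eq:weak_abs}) with $v = u$ and take imaginary parts; the real quantities $\|\nabla u\|_0^2$ and $\kappa^2\|u\|_0^2$ drop out and one is left with
\begin{equation*}
\kappa\,\|u\|_{0,\partial\Omega}^2 \;=\; \Im\!\left[(f,u) + (g,u)_{\partial\Omega}\right] \;\le\; \|f\|_0\,\|u\|_0 + \|g\|_{0,\partial\Omega}\,\|u\|_{0,\partial\Omega},
\end{equation*}
so that $\kappa\|u\|_{0,\partial\Omega}^2$, hence the boundary mass of $u$, is controlled by the data up to an arbitrarily small multiple of $\|u\|_0$.

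The core step is to test (\ref{eq:weak_abs}) with the Morawetz multiplier $v = \bm x\cdot\nabla u + \tfrac{d-1}{2}u$, where $\bm x$ is the position vector and the centre of the star is placed at the origin. Integration by parts --- using $\nabla(\bm x\cdot\nabla u) = \nabla u + (\bm x\cdot\nabla)\nabla u$ together with $\bm x\cdot\nabla(|\nabla u|^2)$ and $\bm x\cdot\nabla(|u|^2)$ --- turns the interior part of $\Re a(u,v)$ into exactly $\tfrac12\|\nabla u\|_0^2 + \tfrac12\kappa^2\|u\|_0^2 = \tfrac12\|u\|_\kappa^2$; here the zeroth-order correction $\tfrac{d-1}{2}u$ is essential, since the pure radial multiplier alone leaves the coefficient $(2-d)/2$ in front of $\|\nabla u\|_0^2$, which is nonpositive for $d \ge 2$. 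The price is the interior data term $\Re\big(f,\bm x\cdot\nabla u + \tfrac{d-1}{2}u\big)$ together with boundary integrals of $(\bm x\cdot\bm n)|\nabla u|^2$, $(\bm x\cdot\bm n)|u|^2$, $\Re\big((\bm x\cdot\nabla\bar u)\,\partial_{\bm n}u\big)$ and the term coming from $\mathrm{i}\kappa(u, \bm x\cdot\nabla u)_{\partial\Omega}$.

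To treat the boundary, I would split $\nabla u = (\partial_{\bm n}u)\,\bm n + \nabla_T u$ into its normal and tangential parts, insert the absorbing condition $\partial_{\bm n}u = g - \mathrm{i}\kappa u$, and exploit star-shapedness, $\bm x\cdot\bm n \ge 0$ on $\partial\Omega$, to fix the sign of the $(\bm x\cdot\bm n)|\nabla_T u|^2$ and $\kappa^2(\bm x\cdot\bm n)|u|^2$ contributions (a surface integration by parts disposes of the remaining tangential-derivative term). A suitable multiple of the imaginary-part identity from the first step cancels the sign-indefinite boundary contributions produced by the $\mathrm{i}\kappa u$ piece of $\partial_{\bm n}u$, and what survives is bounded by the already-controlled quantities $\kappa\|u\|_{0,\partial\Omega}^2$ and $\|g\|_{0,\partial\Omega}^2$. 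The interior data term is absorbed by Young's inequality, costing $\varepsilon\|\nabla u\|_0^2 + C_\varepsilon\|f\|_0^2$ (and an analogous small multiple of $\kappa^2\|u\|_0^2$), using $|\bm x| \le C$ on $\Omega$. Choosing $\varepsilon$ small enough to absorb the $\tfrac12\|\nabla u\|_0^2$ and $\tfrac12\kappa^2\|u\|_0^2$ into the left-hand side then yields $\|u\|_\kappa^2 \le C_S^2(\|f\|_0 + \|g\|_{0,\partial\Omega})^2$, and taking square roots gives the claim.

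I expect the main obstacle to be the bookkeeping of the powers of $\kappa$, so that the final constant $C_S$ does not deteriorate with frequency: the cross terms between the radial multiplier identity and the $v=u$ identity mix $\kappa$-weighted norms ($\kappa\|u\|_{0,\partial\Omega}^2$, $\kappa^2\|u\|_0^2$) with unweighted ones, and each application of Cauchy--Schwarz and Young's inequality must be balanced with the right power of $\kappa$ so that no uncontrolled factor survives; the unweighted $L^2$ contributions of $u$ are the most delicate, as there is no Dirichlet boundary on which to invoke a Poincar\'e inequality. All the remaining manipulations --- the Rellich identity itself, the surface integration by parts, and the final absorption --- are routine once the $\kappa$-dependence is tracked carefully.
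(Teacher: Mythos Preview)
The paper does not actually prove this theorem: it is quoted from \cite{Me:95} and used as a black box, with no argument supplied in the text. Your outline is the classical Rellich--Morawetz multiplier proof that underlies that reference (test with $v=u$ for the boundary trace, then with $v=\bm x\cdot\nabla u+\tfrac{d-1}{2}u$ for the interior, use $\bm x\cdot\bm n\ge 0$, and balance the $\kappa$-weights), so your approach is correct and coincides with the one the paper implicitly invokes.
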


The finite element approximation $u_h$ is defined as: Find $u_h \in V$ such that 
\begin{equation*}
\label{eq:abs_discrete}
a(u_h,v) = (f,v) + (g,v)_{\partial \Omega} \quad \forall v \in V. 
\end{equation*}
When the solution has $H^2(\Omega)$-regularity, the existence of a unique solution to this problem can be guaranteed, when the mesh size requirement $\kappa^2 h << 1$ is satisfied, \cite{BaIh:95,BaIh:97,Me:95}. In this case, there exists a constant C such that the a priori error estimate 
\begin{equation}
\label{eq:abs_aprior}
\|u - u_h \|_\kappa \leq C h \left( \| f \|_0 +  \| g \|_{0,\partial \Omega} \right). 
\end{equation}
holds. 

Due to the boundary term $\mathrm{i} \kappa \left( u, v \right)_{\partial \Omega}$, problem (\ref{eq:weak_abs}) leads to a linear system with a non-normal coefficient matrix. As the boundary term depends on $\kappa$, it is complicated to determine if the non-normality is meaningful or not. In addition, due to the relation between the wave-number and the mesh size it is difficult to study the asymptotic behaviour of GMRES, when $\kappa$ tends to infinity.
\begin{figure}[ht]
\includegraphics[scale=0.8]{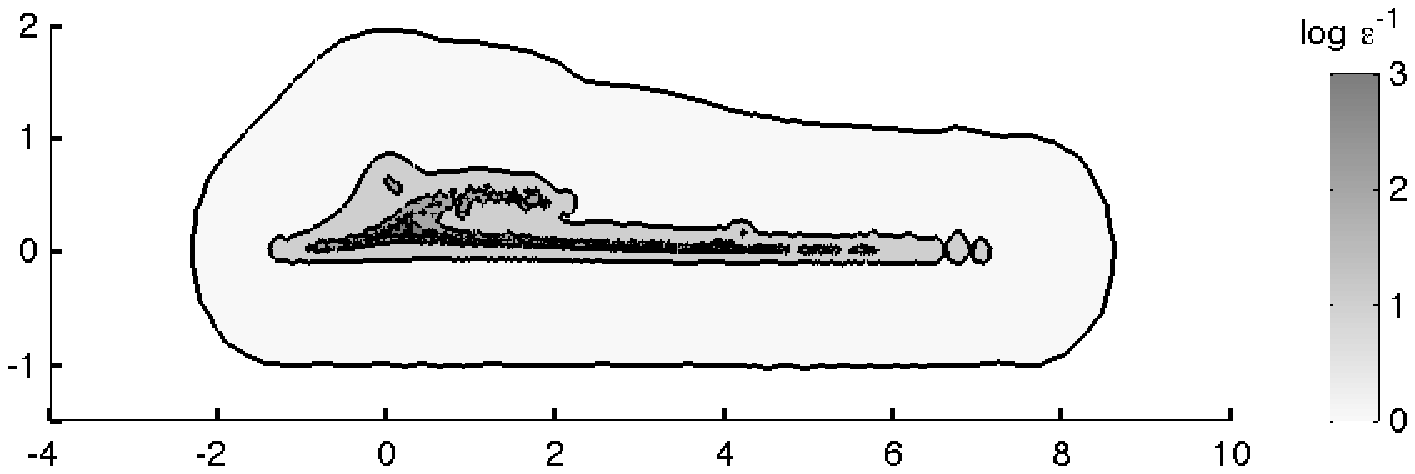} 
\includegraphics[scale=0.8]{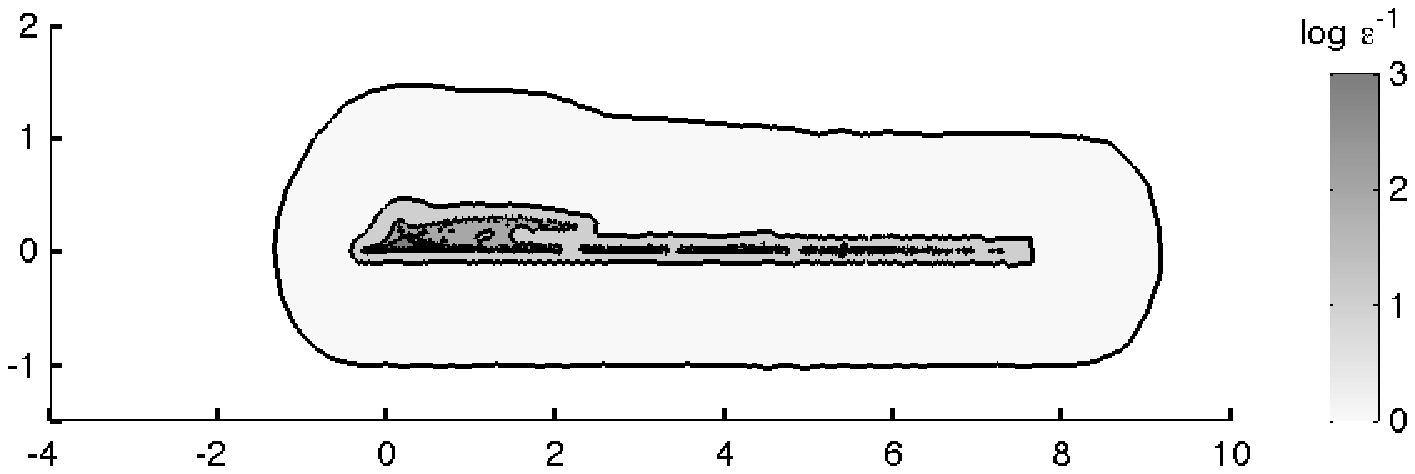} 
\caption{The pseudospectral set for the Helmholtz equation with first order absorbing boundary conditions. The parameter $\kappa = 8 \pi$ and level three mesh is used in the upper figure and level four in the lower one.}
\label{fig:abs1}
\end{figure}

\begin{figure}[ht]
\includegraphics[scale=0.45]{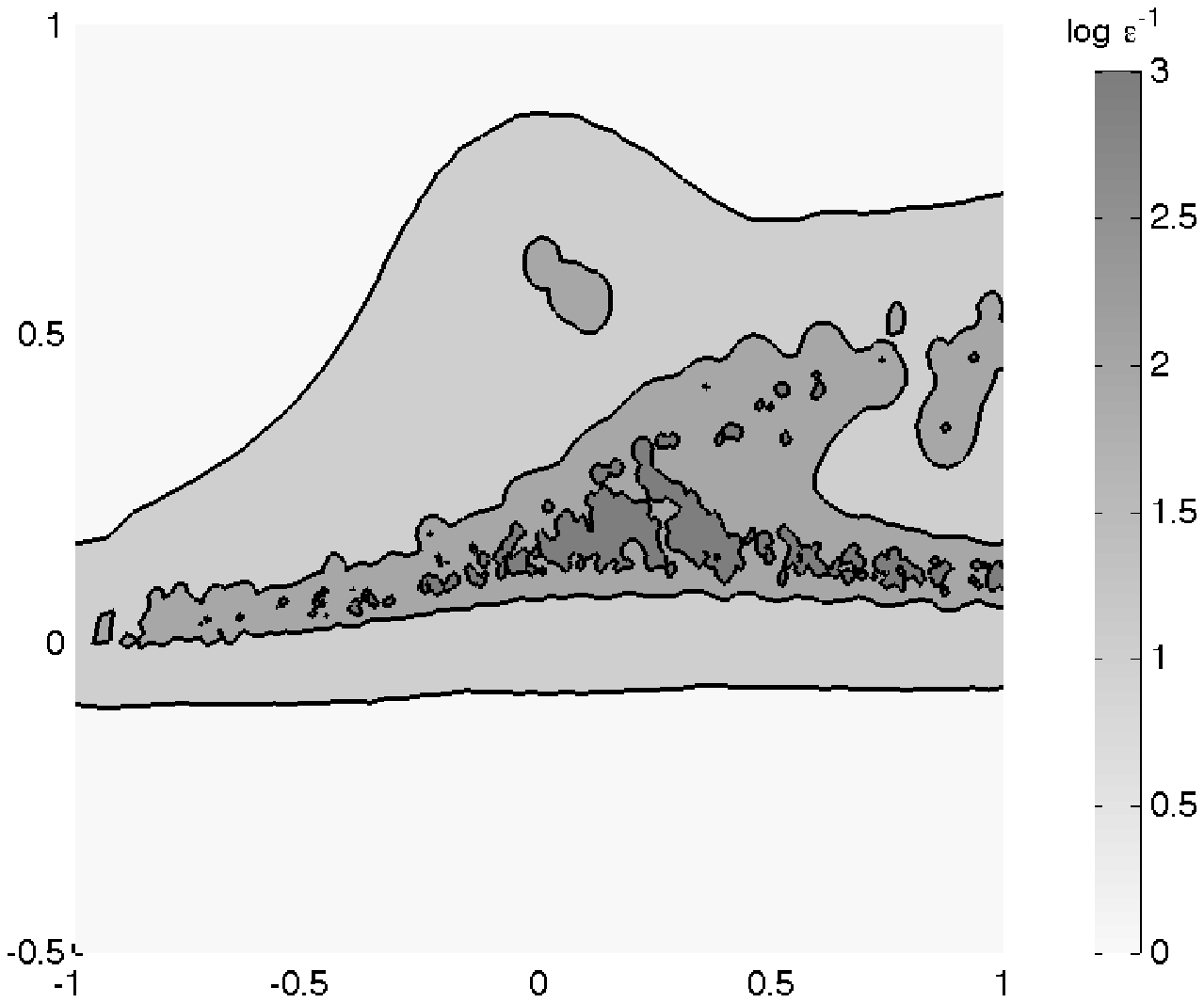}
\includegraphics[scale=0.45]{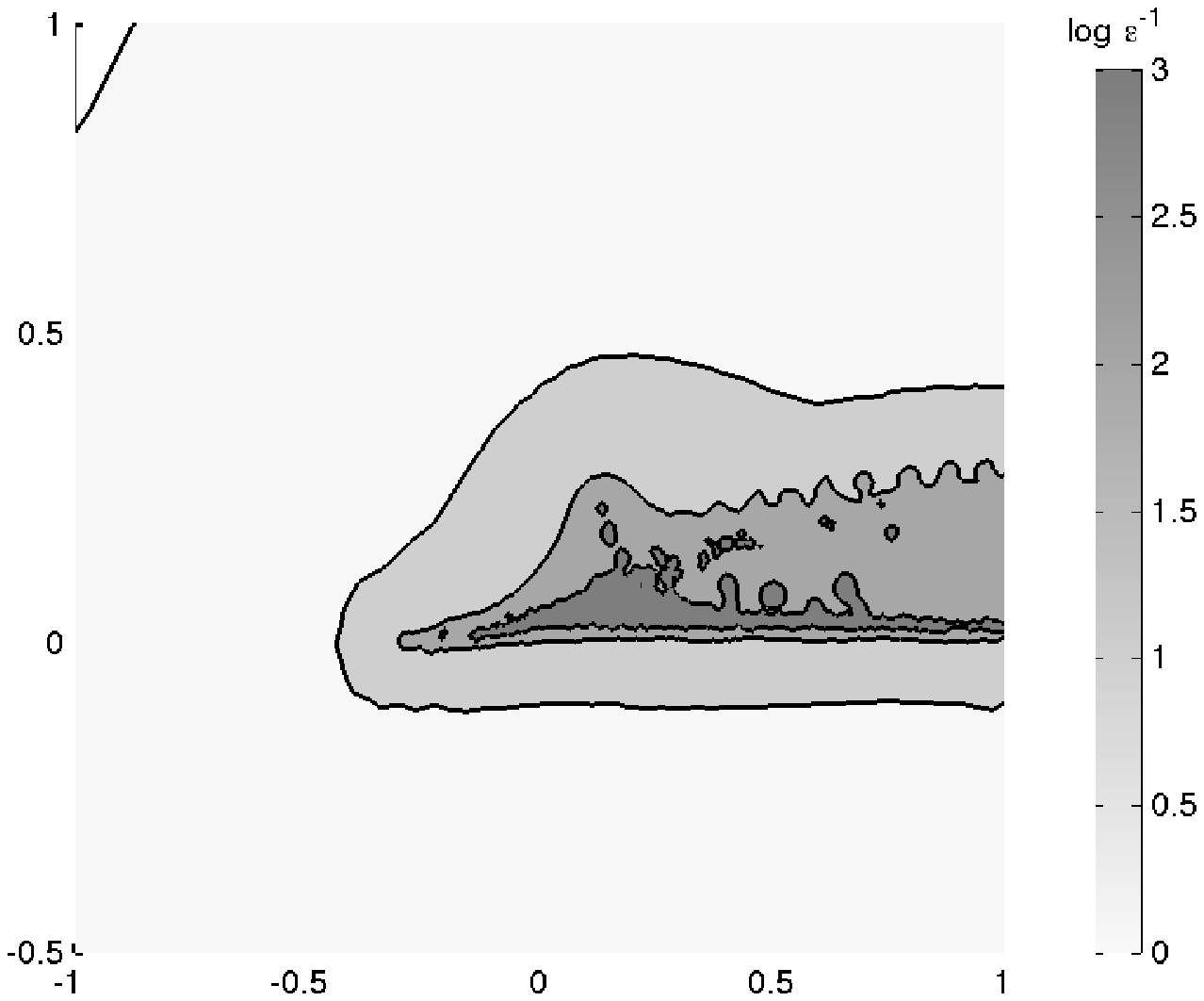}
\caption{Pseudospectral set for the Helmholtz equation with first order absorbing boundary conditions. The parameter $\kappa = 8 \pi$. The mesh levels is three on left and four on right. One can observe the convergence of the set when mesh size tends to zero.  }
\label{fig:abs2}
\end{figure}

When the mesh size is sufficiently small so that the a priori error estimate (\ref{eq:abs_aprior}) holds, Theorem \ref{th:stab_abs}  implies  stability of the discrete problem. We obtain 
\begin{equation}
\label{eq:abs_fem_stab}
\| u_h \|_\kappa \leq  (1+Ch) ( \| f \|_0 + \| g \|_{0,\Omega}) \leq C  ( \| f \|_0 + \| g \|_{0,\Omega}).
\end{equation}
This discrete stability estimate holds under the following assumptions. 
\begin{assumption} 
\label{ass3} 
Assume that $\Omega$ is a bounded, star shaped domain with a smooth boundary, the mesh size $h$ is such that $\kappa^2 h << 1$ and the solution $u$ to problem (\ref{eq:weak_abs}) has $H^2(\Omega)$-regularity.
\end{assumption}

Following the discrete stability result (\ref{eq:abs_fem_stab}) we choose the space $W $ as $L^2(\Omega)$ with the norm $\| \cdot\|_W = \|\cdot \|_0$. The $h$- explicit norm equivalences given in equation (\ref{eq:L2_2_EUCLIDIAN}) can be used for this space. As we are interested in wavenumber and the mesh size explicit estimates, we use the $\kappa$-dependent norm given in equation (\ref{eq:kappa_norm}) for the space V. Norm equivalences for this $\kappa$-dependent norm are easily established using equation (\ref{eq:L2_2_EUCLIDIAN}) and (\ref{eq:H1_2_EUCLIDIAN}) as
\begin{equation}
\label{eq:K_2_EUCLIDIAN}
c \kappa h^{d/2}  | \vec{x}_u | \leq \| u\|_\kappa \leq C( h^{d/2-1}+\kappa h^{d/2} ) | \vec{x}_u |  \quad \forall u \in V,
\end{equation}
for some $c,C$. Application of Corollary \ref{co:stab} gives the stability estimate for the coefficient vector 
\begin{equation}
\label{eq:abs_stab}
| \vec{x}_{u_h} | \leq C \frac{h^{-d}}{\kappa}| \vec{b}|.
\end{equation}
Using Theorem \ref{th:exclusion}  leads to the exclusion region 
\begin{equation*}
B(0, C \kappa h^{d}- \epsilon) \subset \Lambda_\epsilon^c
\end{equation*}
around the origin. To obtain an inclusion, we again derive an inclusion for $FOV(A)$ and apply Theorem \ref{th:FOV_2_PS}. The sesquilinear form satisfies the boundedness estimates
\begin{equation*}
|\Re a(u,u)| \leq C \| u\|_\kappa^2 \quad \forall u \in V
\end{equation*}
and
\begin{equation*}
0 \leq \Im a(u,u) \leq C \kappa h^{d-1} | \vec{x}_u |^2 \quad  \forall u \in V 
\end{equation*}
for some $C$. The estimate between the $L^2(\partial\Omega)$- and Euclidian norm used in above is derived using identical techniques as used for proving inequality (\ref{eq:L2_2_EUCLIDIAN}). When Assumption \ref{ass3} is satisfied, combining the two boundedness estimates leads to the inclusion
\begin{equation*}
FOV(A) \subset \left \{ \; z \in \mathbb{C} \; | \; |z| \leq C \mbox{ and } \; 0 \leq \Im{z} \leq C \kappa h^{d-1} \; \right\}.
\end{equation*}
This set contains the origin, so it cannot be used to derive GMRES convergence bounds. In this case, the presented theory is genuinely required to understand GMRES convergence. 

To validate the derived inclusion and exclusion regions, we have computed examples from the exact set $\Lambda_\epsilon$ for $\kappa = 8 \pi$ using mesh levels three and four. The results are visualized in Figures \ref{fig:abs1} and \ref{fig:abs2}. Although, the $L$ - shaped domain used in computations does not have smooth boundary nor $H^2(\Omega)$-regularity, the actual pseudospectral set is in good agreement with our theoretical results. Most importantly, when $\epsilon$ is sufficiently large, the pseudospectrum curls around the origin as predicted. Due to the solution having less that $H^2(\Omega)$-regularity, the requirement on the mesh size on $L$-shaped domain just takes the form $h^\alpha \kappa << 1$, for some $\alpha <2$, depending on regularity of the exact solution. 

%
%

The approximate pseudospectrum could also be used to to derive convergence estimate for GMRES method using Bratwurst shaped domains to solve the minimization problem. However, as a preconditioner would always be applied, the current case is not very interesting hence we do not proceed further with it. 


\subsection{Shifted-Laplace preconditioned Helmholtz equation}
The analysis of inclusion and exclusion regions is more complicated, when a preconditioner is applied to speed up the convergence of the GMRES method. Several different preconditioners have been proposed for problem (\ref{eq:weak_abs}), see e.g.  \cite{Er:08}. We consider here the shifted-Laplace preconditioner \cite{ErOoVu:04}. This preconditioner is based on solving an auxiliary problem on each step of the iteration. The auxiliary problem is defined as: For a given $u \in V$ find $Pu \in V$ such that 
\begin{equation}
\label{eq:prec_action}
b( Pu,v) = \vec{x}_v^* \vec{x}_u \quad \forall v \in V. 
\end{equation}
The sesquilinear form $b$ in the above equation is given as 
\begin{equation*}
b(u,v) = (\nabla  u, \nabla v) + \mathrm{i}\kappa \left(u, v \right)_{\partial\Omega} - \kappa^2 (u,v)  + \mathrm{i}{\sigma} (u,v),
\end{equation*}
in which $\kappa$ is  as defined in Section 4.2 and $\sigma \in \mathbb{R}, \sigma >0$. This is, a loss term $\mathrm{i}\sigma(u,v)$ is added to the sesquilinear from defined in equation (\ref{eq:abs_forms}). The addition of the loss term leads to a stability estimate on the finite element space $V$ independent of the mesh size. Choosing $v=Pu$ in equation (\ref{eq:prec_action}) gives%
\begin{equation*}
b(Pu,Pu) = \vec{x}_{Pu}^* \vec{x}_u  \quad \forall v \in V.
\end{equation*}
Taking imaginary part leads to
\begin{equation*}
\kappa \| P u \|_{\partial \Omega} + \sigma \| Pu \|_0^2 = \Im  \vec{x}_{Pu}^* \vec{x}_u.
\end{equation*}
This is, 
\begin{equation*}
\sigma \|P u \|_0^2 \leq \Im  \vec{x}_{Pu}^* \vec{x}_u.
\end{equation*}
Now, using Cauchy-Schwarz inequality and norm equivalence (\ref{eq:L2_2_EUCLIDIAN}) gives
\begin{equation} 
\label{eq:stab_abs_loss}
\| Pu \|_0 \leq C \sigma^{-1} \| u \|_0 \quad \forall u \in V
\end{equation}
for some $C$. The matrix form of the preconditioner is denoted as $B^{-1}$, where $B_{ij} = b(\varphi_j, \varphi_i)$. Hence, the problem to be solved by the GMRES method is
\begin{equation*}
AB^{-1}\tilde{ \vec{x} } = \vec{b} \quad ,  \quad \vec{x} = B^{-1}\tilde{\vec{x}}. 
\end{equation*}

The rationale behind using shifted-Laplace preconditioners is that when a sufficiently large loss term is added, the action of the preconditioner can be efficiently evaluated using a multigrid method, \cite{ErOoVu:06}. When applied directly  to solve the original problem (\ref{eq:weak_abs}), multigrid methods face two challenges, \cite{El:01}. The standard smoothing iteration is not stable and the coarse grid correction has to be made on a sufficiently fine mesh. The introduction of a loss term has been analyzed in \cite{Ha:14} for a problem with zero Dirichlet boundary conditions. In this case, additional losses improve the multigrid solver by allowing the coarse grid correction to be made on a coarser mesh. The coarse grid depends on the loss term, hence there is a tradeoff between the number of GMRES iterations and the cost of applying the preconditioner. Typically, the loss parameter is chosen as $\sigma = 0.5 \kappa^2$.  For simplicity, we will consider here only the exact preconditioner. This gives good insight on what one can expect from the inexact case.

As we will see, a shifted-Laplace preconditioner can eliminate the mesh size dependency from the pseudospectral set. This is, the inclusion and exclusion regions are independent of the applied mesh size. This is a desired property, as the mesh size dependency in the non-preconditioned case leads quickly to an unbearably large number of iterations. The exclusion regions will, however depend on the ratio of $\kappa$ and $\sigma$. 

The shifted-Laplace preconditioner has been previously analyzed in \cite{GiErVu:07} by estimating the location of the eigenvalues. The existing analysis is not explicit in $\sigma$ and does not take the non-normality into account. In addition, the previous work does not include the exclusion region around the origin, which we can obtain using Theorem 3.1. and the stability result given in equation (\ref{eq:abs_fem_stab}). 

To study the shifted-Laplace preconditioner, we interpret the matrix $AB^{-1}$ as the matrix form of the sesquilinear form $a(Pu,u)$, where $a(u,v)$ is as defined in equation (\ref{eq:abs_forms}) and $P$ in equation (\ref{eq:prec_action}). A suitable stability estimate for this sesquilinear form is established by the following Corollary.

\begin{corollary} 
\label{eq:sl_corollary}
Let $u \in V$ be such that
\begin{equation}
\label{eq:sl_proof}
a(Pu,v) = (f,v) \quad \forall v \in V.
\end{equation}
In addition, let Assumption \ref{ass3} be satisfied. Then there exists a constant $C>0$ independent of $u$,$f$,$\kappa$,$h$ and $\sigma$ such that 
\begin{equation*}
| \vec{x}_u| \leq   C h^{d/2} \left( 1 +  \frac{\sigma}{\kappa}  \right) \ \|f\|_0. 
\end{equation*}

\end{corollary}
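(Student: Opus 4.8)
The strategy is to read the hypothesis $a(Pu,v)=(f,v)$ for all $v\in V$ as saying that $Pu$ is precisely the finite element solution of the absorbing Helmholtz problem (\ref{eq:weak_abs}) with data $f$ and $g=0$. Under Assumption \ref{ass3} the discrete stability estimate (\ref{eq:abs_fem_stab}) then gives $\|Pu\|_\kappa\le C\|f\|_0$, and since $\kappa^2\|Pu\|_0^2\le\|Pu\|_\kappa^2$ by the definition (\ref{eq:kappa_norm}) of $\|\cdot\|_\kappa$, this already produces the $L^2$ bound $\|Pu\|_0\le C\kappa^{-1}\|f\|_0$. This is the only step in which the stability of the underlying weak problem is used.

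Next I would return to the defining relation (\ref{eq:prec_action}) of the preconditioner and test it with $v=u$, which yields $|\vec{x}_u|^2=\vec{x}_u^*\vec{x}_u=b(Pu,u)$. Because $b$ differs from $a$ only by the added loss term, $b(Pu,u)=a(Pu,u)+\mathrm{i}\sigma(Pu,u)$, and the hypothesis gives $a(Pu,u)=(f,u)$; hence $|\vec{x}_u|^2=(f,u)+\mathrm{i}\sigma(Pu,u)$. Taking moduli and applying the Cauchy--Schwarz inequality in $L^2(\Omega)$ gives $|\vec{x}_u|^2\le\bigl(\|f\|_0+\sigma\|Pu\|_0\bigr)\|u\|_0$, and inserting the bound on $\|Pu\|_0$ from the first step turns this into $|\vec{x}_u|^2\le C\bigl(1+\sigma/\kappa\bigr)\|f\|_0\,\|u\|_0$.

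Finally I would move from the $L^2$ norm back to the Euclidean norm using the $h$-explicit equivalence (\ref{eq:L2_2_EUCLIDIAN}), $\|u\|_0\le Ch^{d/2}|\vec{x}_u|$, which converts the previous inequality into $|\vec{x}_u|^2\le Ch^{d/2}\bigl(1+\sigma/\kappa\bigr)\|f\|_0\,|\vec{x}_u|$; dividing by $|\vec{x}_u|$, with the case $\vec{x}_u=0$ being trivial, gives the claim. The computation is short and the only real point of care is the first step: one must be certain that $Pu$ solves the discrete Helmholtz problem exactly, so that (\ref{eq:abs_fem_stab}) can be invoked verbatim with $g=0$, and one must keep straight that $b=a+\mathrm{i}\sigma(\cdot,\cdot)$ and which argument of each sesquilinear form carries the conjugation. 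I would deliberately avoid the weaker bound (\ref{eq:stab_abs_loss}) here, since $\|Pu\|_0\le C\sigma^{-1}\|u\|_0$ would leave an $h^d|\vec{x}_u|^2$ term on the wrong side of the estimate.
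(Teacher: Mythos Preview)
Your proposal is correct and follows the paper's own proof essentially line for line: invoke the discrete stability (\ref{eq:abs_fem_stab}) to bound $\|Pu\|_\kappa$ (hence $\|Pu\|_0\le C\kappa^{-1}\|f\|_0$), test (\ref{eq:prec_action}) with $v=u$ to obtain $|\vec{x}_u|^2=(f,u)+\mathrm{i}\sigma(Pu,u)$, then apply Cauchy--Schwarz and the norm equivalence (\ref{eq:L2_2_EUCLIDIAN}). Your write-up is in fact slightly more explicit than the paper's, since you spell out the extraction of $\|Pu\|_0$ from $\|Pu\|_\kappa$ and the cancellation of $|\vec{x}_u|$ at the end.
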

\begin{proof}
Application of equation (\ref{eq:abs_fem_stab}) gives
\begin{equation}
\label{eq:sl_proof1}
\|Pu \|_\kappa \leq C \|f \|_0 .
\end{equation}
It follows from definition (\ref{eq:prec_action}) that 
\begin{equation*}
a(Pu,u) = | \vec{x}_u |^2 - \mathrm{i}\sigma (Pu,u).
\end{equation*}
Combining above with equation (\ref{eq:sl_proof}) gives 
\begin{equation}
| \vec{x}_u |^2 =  (f,u) + \mathrm{i}\sigma (Pu,u).
\end{equation}
Using Cauchy-Schwarz inequality, estimate (\ref{eq:sl_proof1}) and norm equivalence (\ref{eq:L2_2_EUCLIDIAN}) gives
\begin{equation*}
| \vec{x}_u |   \leq C h^{d/2} \left( \| f\|_0+ C_S \frac{\sigma}{\kappa}\|f \|_0 \right).
\end{equation*}
\end{proof}

The above stability estimate is given in the norm $\|u \| = | \vec{x}_u|$. Hence, we choose this as the norm of the space $V$. The above Corollary also suggest to choose the space $W=L^2(\Omega)$ as previously. With these choices, a direct application of Theorem \ref{th:exclusion} gives the exclusion 
\begin{equation}
\label{eq:sl_exclusion}
B(0, C \frac{\kappa}{\kappa + \sigma}  - \epsilon) \subset \Lambda_\epsilon^c. 
\end{equation}
When $\sigma = 0$, the preconditioner solves the problem exactly and $\Lambda_{\epsilon} = B(1,\epsilon)$. As the constant in above is $C$ is independent of $\sigma$ and $\kappa$, setting $\sigma = 0$, leads to  $C \leq 1$. A field of values based inclusion can be obtained as follows. There holds that 
\begin{equation*}
\vec{x}_u^*A B^{-1}\vec{x}_u^* = a(Pu,u) = \vec{x}_u^* \vec{x}_u - \mathrm{i}\sigma (Pu,u). 
\end{equation*}
An inclusion for FOV  follows by estimating the last term. By the stability result given in equation (\ref{eq:stab_abs_loss}) and norm equivalence (\ref{eq:L2_2_EUCLIDIAN}), there holds that
\begin{equation*}
\sigma (Pu,u) \leq \sigma\| Pu \|_0 \|u \|_0 \leq C  \vec{x}_u^*\vec{x}_u.
\end{equation*}
This is, the FOV is located inside the set $|1-z|\leq C_1$.  

The polynomial minimization problem in the GMRES convergence bound (\ref{eq:GMRES_B2}) does not give any information on the convergence, when the approximate pseudospectrum is an annulus surrounding the origin. To apply the FOV based estimate, one has to explicitly know the constants in derived inclusion and exclusion regions to guarantee that this cannot happen. The constant $C_1$ in the inclusion for FOV is related to the norm equivalence between $L^2(\Omega)$ and Euclidian norm. It is easy to see, that $C_1 = \sqrt{\mbox{cond}(M)}$, where $M_{ij} = (\varphi_i,\varphi_j)$ is the mass matrix. In typical cases $C_1 \approx 4$, so that derived inclusion  is not useful when $\sigma = 0.5 \kappa^2$ and the dimension of the exclusion tends to zero when $\kappa$ grows. 

Due to the close relation between the preconditioner and the original problem, we can estimate the pseudospectrum using a problem specific technique. 

\begin{lemma} 
\label{lemma:sl_inclusion}
There exists a positive constant $C>0$ such that 
\begin{equation*}
\left\{z \in \mathbb{C}\; \Big| \; C \left(  \frac{1}{ |z|^2 - \Re{z} }  + \frac{1}{|1-z|} \right) < \frac{1}{\epsilon} \; \right\} \setminus \overline{B \left( \frac{1}{2}, \frac{1}{2} \right)} \subset \Lambda_\epsilon^c
\end{equation*}
\end{lemma}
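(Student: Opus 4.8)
The plan is to estimate the resolvent norm $|(zI - AB^{-1})^{-1}|$ directly via the auxiliary-variable technique of Section 3, combining it with the exclusion region already obtained in \eqref{eq:sl_exclusion} (which accounts for the set $\overline{B(1/2,1/2)}$ being removed, since $|z|^2 - \Re z < 0$ precisely inside that disc and there the first term is not even meaningful). Following the framework, I would fix $z$ outside $\overline{B(1/2,1/2)}$, introduce $\vec{x}_v \in \mathbb{C}^N$ with $(zI - AB^{-1})\vec{x}_v = \vec{x}_u$, and seek a bound $|\vec{x}_v| \le f(z)|\vec{x}_u|$; then \eqref{eq:norm_aux} gives the inclusion with $\tilde{\Lambda}_\epsilon^c = \{ f(z) < \epsilon^{-1}\}$, and matching $f(z)$ to $C\big( (|z|^2 - \Re z)^{-1} + |1-z|^{-1}\big)$ finishes the proof.

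The key step is to obtain the stability bound for $\vec{x}_v$ with exactly this $z$-dependence. I would set $w = Pv$, so $\vec{x}_v^*\vec{x}_v = b(Pv,Pv)$ (from \eqref{eq:prec_action} with test vector $\vec{x}_v$) and the defining relation becomes, after testing $(zI-AB^{-1})\vec{x}_v = \vec{x}_u$ suitably, a statement about $a(Pv,\cdot)$ versus $z\vec{x}_v$. Concretely $a(Pv,v) - z\,\vec{x}_v^*\vec{x}_v = \vec{x}_v^*\vec{x}_u$, and using $a(Pv,v) = \vec{x}_v^*\vec{x}_v - \mathrm{i}\sigma(Pv,v)$ (the identity already derived in the proof of Corollary 4.1) this reads $(1-z)\vec{x}_v^*\vec{x}_v - \mathrm{i}\sigma(Pv,v) = \vec{x}_v^*\vec{x}_u$. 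This single scalar identity yields the $|1-z|^{-1}$ contribution after bounding $\sigma(Pv,v)$ by $C|\vec{x}_v|^2$ via \eqref{eq:stab_abs_loss} — but only when $|1-z|$ is not too small, i.e. away from $z=1$. To cover the region near $z=1$ (where $|z|^2 - \Re z$ is close to zero as well, namely near the boundary circle $|z|^2 = \Re z$), I would instead test the defining equation against $\vec{x}_u = (zI - AB^{-1})\vec{x}_v$ itself and extract the real part: writing $\vec{x}_u^*\vec{x}_u = \vec{x}_u^*(zI-AB^{-1})\vec{x}_v$ and using $\vec{x}_u^* AB^{-1}\vec{x}_v$-type expansions together with the stability estimate \eqref{eq:abs_fem_stab} applied to $Pv$, one should be able to produce a term controlled by $(|z|^2 - \Re z)^{-1}$. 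Adding the two bounds (valid in overlapping regions, one good near $z=1$, the other good far from it) and taking the pointwise minimum — or simply summing, which is what the statement does — gives $f(z) = C\big( (|z|^2-\Re z)^{-1} + |1-z|^{-1}\big)$.

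The main obstacle I anticipate is the bookkeeping of the quantity $|z|^2 - \Re z$: it is exactly $|z - \tfrac12|^2 - \tfrac14$, i.e. the "signed distance squared" to the excluded disc $B(1/2,1/2)$, so the natural object coming out of a real-part estimate of $\vec{x}_u^*\vec{x}_u$ against $z\vec{x}_v^*\vec{x}_v - \vec{x}_v^*AB^{-1}\vec{x}_v$ must be massaged into precisely this form, presumably by using that $\Re(\vec{x}_v^* AB^{-1}\vec{x}_v) = \Re a(Pv,v) = |\vec{x}_v|^2 - \sigma\,\Im(Pv,v) \ge$ something controllable, plus the boundedness $|a(Pv,v)| \le C\|Pv\|_\kappa^2 \le C\|f\|_0^2$-type bounds. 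Getting the $\Re z$ and $|z|^2$ to combine correctly — rather than, say, $\mathrm{Re}\,z$ alone — is where the argument is delicate; everything else (Cauchy–Schwarz, \eqref{eq:stab_abs_loss}, \eqref{eq:abs_fem_stab}, \eqref{eq:L2_2_EUCLIDIAN}) is routine. Once $f(z)$ is in hand, solving $f(z) < \epsilon^{-1}$ for the inclusion and removing $\overline{B(1/2,1/2)}$ (where the bound degenerates) is immediate.
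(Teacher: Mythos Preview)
Your proposal has a genuine gap: the mechanism by which the quantity $|z|^2 - \Re z$ appears is not identified, and the two-region strategy you sketch will not produce it.

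Your first scalar identity $(1-z)|\vec{x}_v|^2 - \mathrm{i}\sigma(Pv,v) = \pm\vec{x}_v^*\vec{x}_u$ together with $\sigma|(Pv,v)| \le C|\vec{x}_v|^2$ yields only $(|1-z| - C)|\vec{x}_v| \le |\vec{x}_u|$, i.e.\ a bound of the form $(|1-z|-C)^{-1}$ valid for $|1-z| > C$ with $C = \sqrt{\mathrm{cond}(M)} \approx 4$; this is the same FOV-type estimate the paper already rejects as too weak. Your second estimate, testing against $\vec{x}_u$ and invoking \eqref{eq:abs_fem_stab}, is not worked out; if you actually expand $|\vec{x}_u|^2 = |z\vec{x}_v - AB^{-1}\vec{x}_v|^2$ you obtain $(|z|^2 - 2\Re z)|\vec{x}_v|^2$ plus cross terms and $|AB^{-1}\vec{x}_v|^2$, none of which reduce to the desired $|z|^2 - \Re z$ without further ideas. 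Moreover \eqref{eq:abs_fem_stab} is a Helmholtz stability bound (needing Assumption~\ref{ass3}) and plays no role in the paper's proof of this lemma, which is purely algebraic.

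The paper's actual argument is a single estimate, not two patched together. First it uses the symmetry $A=A^T$, $B=B^T$ to rewrite the resolvent norm as $|(zB-A)^{-1}B|$, so the auxiliary equation becomes $(A-zB)\vec{x}_v = B\vec{x}_u$, i.e.\ $(1-z)a(v,w) - \mathrm{i}\sigma z(v,w) = b(u,w)$ for all $w$. The key step you are missing is then to \emph{shift} the unknown: after dividing by $1-z$ and rearranging, one obtains
\[
a\bigl(v-(1-z)^{-1}u,\,w\bigr) - \frac{\mathrm{i}\sigma z}{1-z}\bigl(v-(1-z)^{-1}u,\,w\bigr) = \frac{\mathrm{i}\sigma}{(1-z)^2}(u,w).
\]
Choosing $w = v-(1-z)^{-1}u$ and taking the imaginary part, the identity $\Im\bigl(-\mathrm{i}\tfrac{z}{1-z}\bigr) = \tfrac{|z|^2-\Re z}{|1-z|^2}$ produces the loss coefficient $\sigma\tfrac{|z|^2-\Re z}{|1-z|^2}$ directly; Cauchy--Schwarz then gives $\|v-(1-z)^{-1}u\|_0 \le (|z|^2-\Re z)^{-1}\|u\|_0$, and the second term $|1-z|^{-1}$ comes from the triangle inequality $|\vec{x}_v| \le |\vec{x}_v - (1-z)^{-1}\vec{x}_u| + |1-z|^{-1}|\vec{x}_u|$. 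So both terms in $f(z)$ arise from one computation, and the restriction $z\notin\overline{B(1/2,1/2)}$ is exactly the positivity condition $|z|^2-\Re z>0$ for the loss coefficient.
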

\begin{proof} There holds that $A = A^T$ and $B = B^T$. Using the identity $|C| = | C^*|$  for any $C \in \mathbb{C}^{N \times N}$ , it follows that 
\begin{equation*}
\sup_{ \vec{x}_u \in \mathbb{C}^n } \frac{  \bm{|} ( zI -  A B^{-1} )^{-1} \vec{x}_u\bm{|} }{ \bm{|} \vec{x}_u \bm{|} } = \sup_{ \vec{x}_u \in \mathbb{C}^n } \frac{  | ( z B -A)^{-1} B \vec{x}_u | }{ | \vec{x}_u | } 
\end{equation*}
Let $\vec{x}_v \in \mathbb{C}^N$ be such that  
\begin{equation*}
(A - zB) \vec{x}_v =  B \vec{x}_u.
\end{equation*}
As in Section 3, we establish the stability estimate $| \vec{x}_v| \leq f(z) |\vec{x}_u|$. When $f(z)$ is finite, this estimate yields the desired bound. Testing with any $\vec{x}_w \in \mathbb{R}^N$  gives
\begin{equation*}
(1-z) a(v,w ) - \mathrm{i}\sigma z (v,w) =  \vec{x}_w^*B \vec{x}_u.
\end{equation*}
Assuming that $z \neq 1$ and dividing by $1-z$ yields
\begin{equation*}
a(v,w) - \frac{ \mathrm{i}\sigma z}{1-z} (v, w) =   a(\frac{u}{1-z} ,w) + \frac{\mathrm{i}{\sigma}}{1-z} ( u,w).
\end{equation*}
By adding an subtracting a suitable term, the above can be written as 
\begin{equation*}
a(v - (1-z)^{-1}u,w) - \frac{ \mathrm{i}\sigma z}{1-z} (v - (1-z)^{-1}u, w) = \frac{ \mathrm{i}\sigma }{(1-z)^2} (u,w)
\end{equation*}
Choosing $w = v - (1-z)^{-1}u$,  using the identity $\frac{z}{1-z} = \frac{z - |z|^2}{|1-z|^2}$ and taking  imaginary part gives
\begin{equation*}
\kappa \|v - (1-z)^{-1}u  \|_{0,\partial \Omega}^2 + \sigma \frac{ |z|^2 - \Re{z} }{|1-z|^2} \| v - (1-z)^{-1}u \|_0^2 = \Im \frac{ \mathrm{i}\sigma }{(1-z)^2} (u,v - (1-z)^{-1}u ) 
\end{equation*}
When $z \neq 1$ and $\Re{z} - |z|^2 > 0$, this is  
 \begin{equation*}
\left| \frac{1}{2}-z \right| \geq \frac{1}{2}, 
 \end{equation*}
the coefficient of the $L^2(\Omega)$ - term is positive and we obtain the estimate 
\begin{equation*}
\| v - (1-z)^{-1} u \|_0 \leq \frac{1}{ |z|^2 - \Re{z} }\|u \|_0 .
 \end{equation*}
Using the norm equivalence given in equation (\ref{eq:L2_2_EUCLIDIAN}) yields 
\begin{equation*}
| \vec{x}_v - (1-z)^{-1} \vec{x}_u | \leq C \frac{1}{ |z|^2 - \Re{z} } | \vec{x}_u |.
\end{equation*}
The stability estimate follows from  the above equation and triangle inequality as
\begin{equation*}
| \vec{x}_v | \leq  | \vec{x}_v - (1-z)^{-1} \vec{x}_u | + \frac{ |\vec{x}_u|}{|1-z|} \leq  C \left(  \frac{1}{ |z|^2 - \Re{z} } | + \frac{1}{|1-z|} \right) | \vec{x}_u |. \end{equation*}
\end{proof}.

%

To obtain an overview of the derived bounds we have computed the pseudospectrum for $\kappa = 16 \pi$ and $\sigma = 0.5 \kappa, 0.5 \kappa^2$ using the level three mesh. The results are presented in Figure \ref{fig:sl_1}. Based on these results, analysis given in this Section seems to capture the behavior of the pseudospectrum rather well. In both cases, when $\epsilon$ is sufficiently small, pseudospectrum is located inside the disc $B(\frac{1}{2}, \frac{1}{2} )$ ,  as predicted by Lemma \ref{lemma:sl_inclusion}.  When the loss term is small, the pseudospectrum has a rather small diameter and is located close to ${1}$. For large values of $\sigma$, the set moves closer to the origin. These results are in good agreement with the exclusion given in equation (\ref{eq:sl_exclusion}).

The GMRES convergence bound gives usable information only if the origin is outside the approximate pseudospectrum. In the current case, this requirement limits the value of $\epsilon$ and thus determines the GMRES convergence rate. We have studied the pseudospectrum close to the origin in more detail by using a bisection search to find $x \in \mathbb{R}$ closest to the origin such that $|(x I - AB^{-1})^{-1}| = 2 \cdot 10^{-2}$ for different $\kappa$ between $4 \pi$ and $64 \pi$ for $\sigma=0.5 \kappa$ and $\sigma = 0.5 \kappa^2$. The results are visualized in Figure \ref{fig:sl_2}. These results indicate, that the exclusion given in equation (\ref{eq:sl_exclusion}) corresponds well with the real behavior of the set.

\begin{figure}[ht]
\includegraphics[scale=0.45]{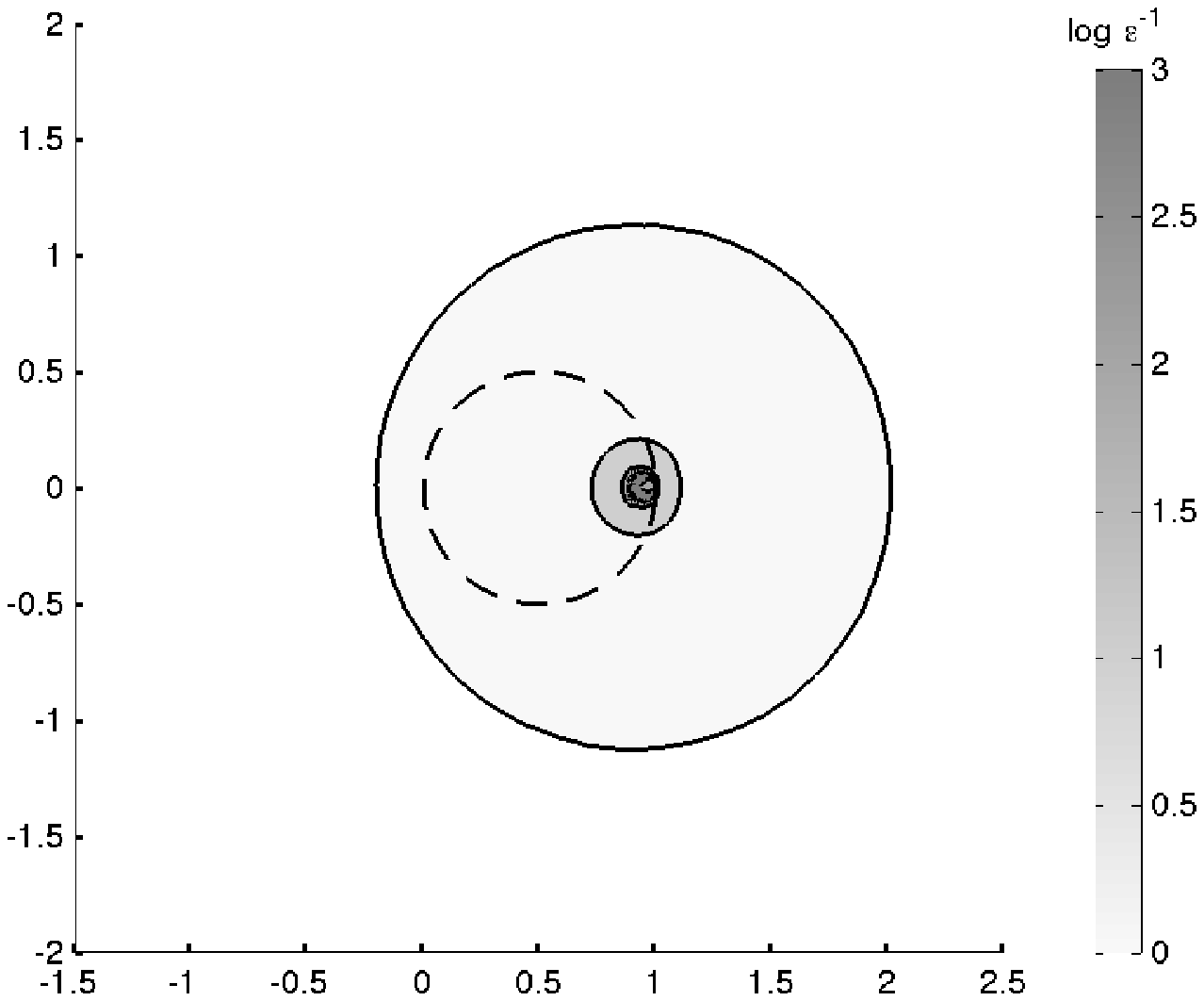}
\includegraphics[scale=0.45]{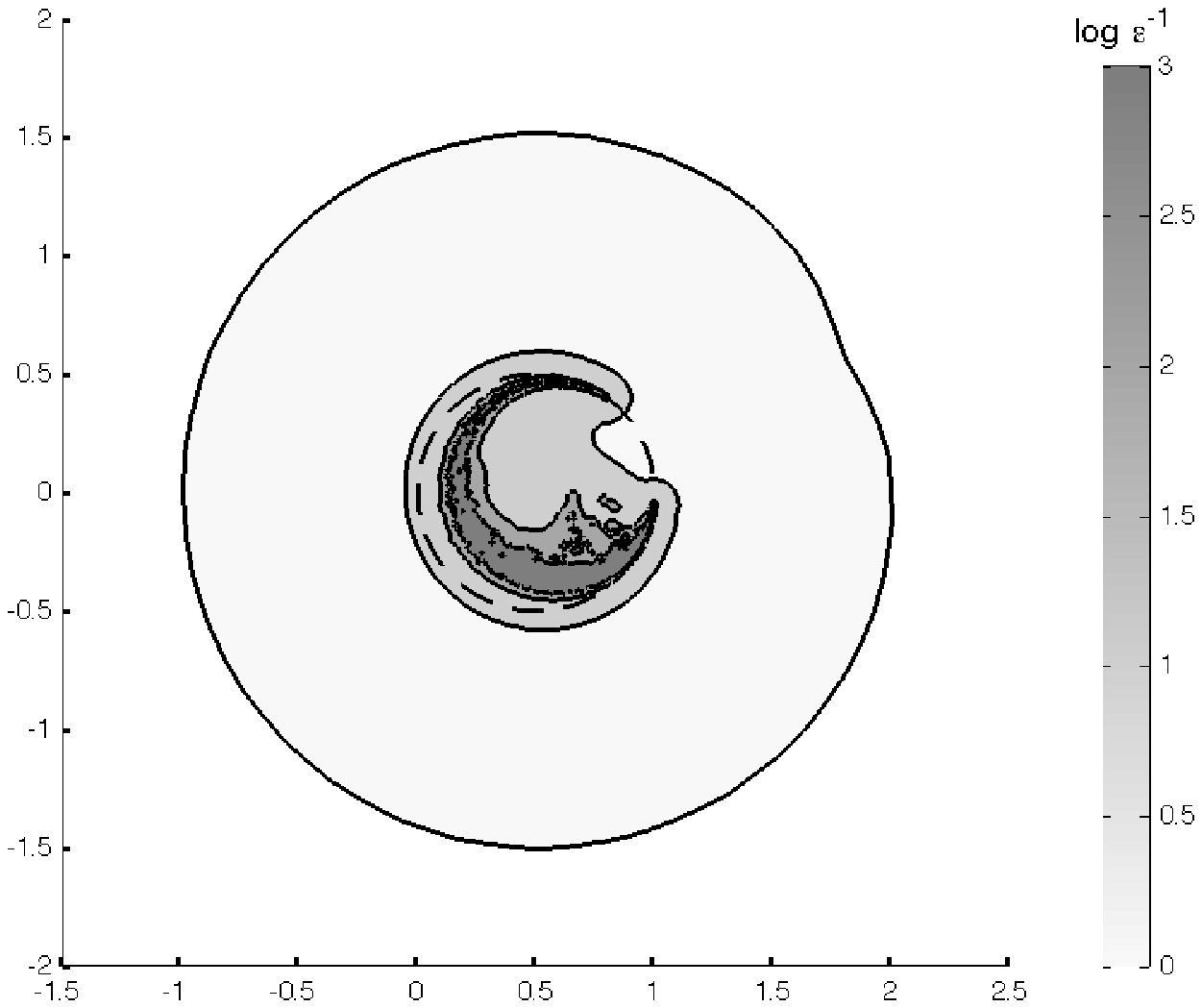}
\caption{Pseudospectrum for Example 4.3 with $\epsilon = 1,10,100,1000$. The parameter $\kappa = 16 \pi$ and level three mesh was used. On left the loss term is chose as $\sigma = 0.5 \kappa$ and on right as $\sigma = 0.5 \kappa^2$. The circle $B(\frac{1}{2},\frac{1}{2})$ is visualized with a dashed line.}
\label{fig:sl_1}
\end{figure}

\begin{figure}[ht]
\begin{minipage}[t]{0.45\linewidth}
\includegraphics[scale=0.5]{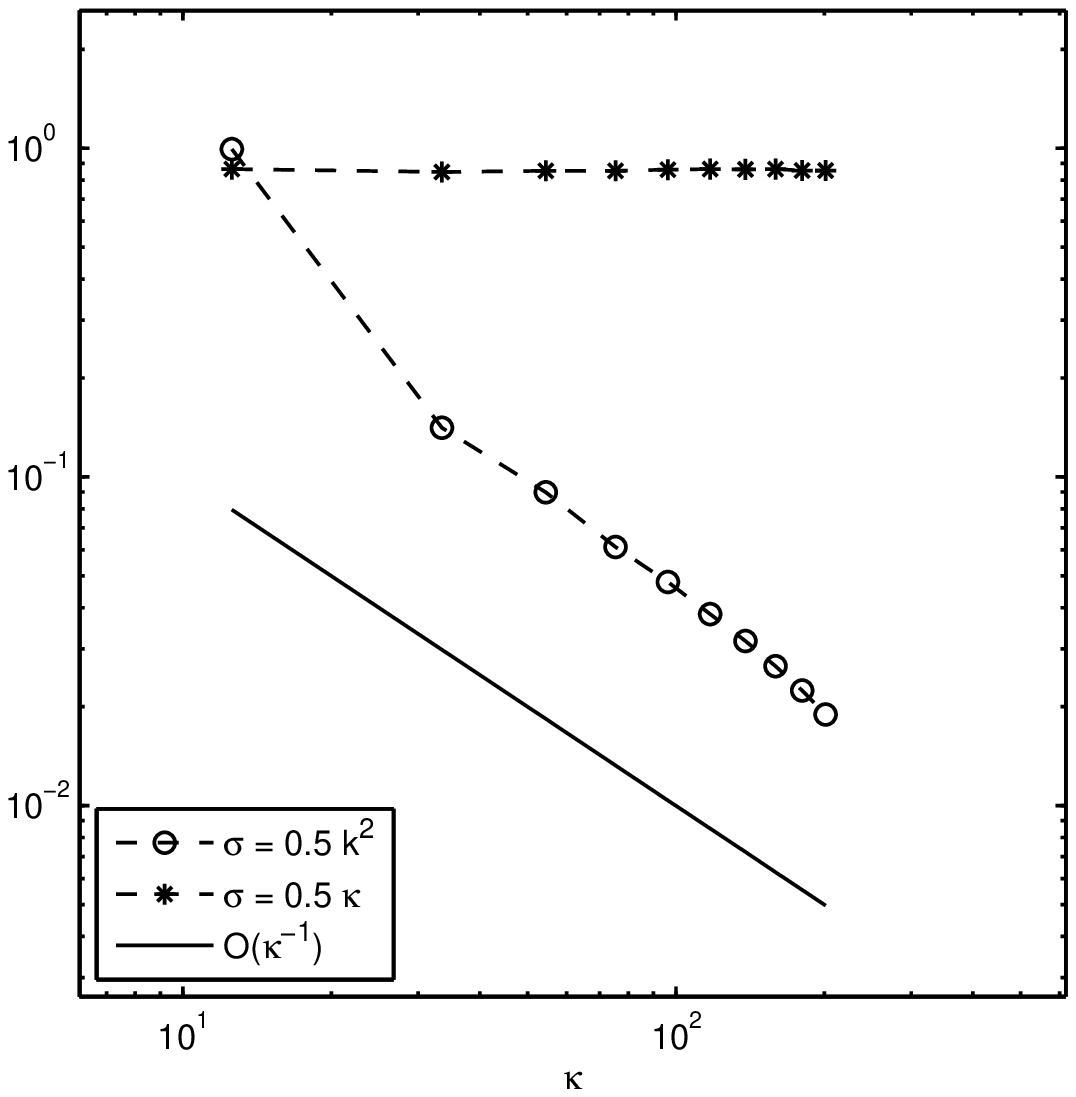}
\caption{ The point $x \in \mathbb{R}$ closest to the origin such that $|(x I - A )^{-1}| = 2 \cdot 10^{-2}$ for Example 4.3. The dependency is as predicted by the exclusion. Mesh level five was used in this computation. }
\label{fig:sl_2}
\end{minipage}
\hfill
\begin{minipage}[t]{0.45\textwidth}
\includegraphics[scale=0.5]{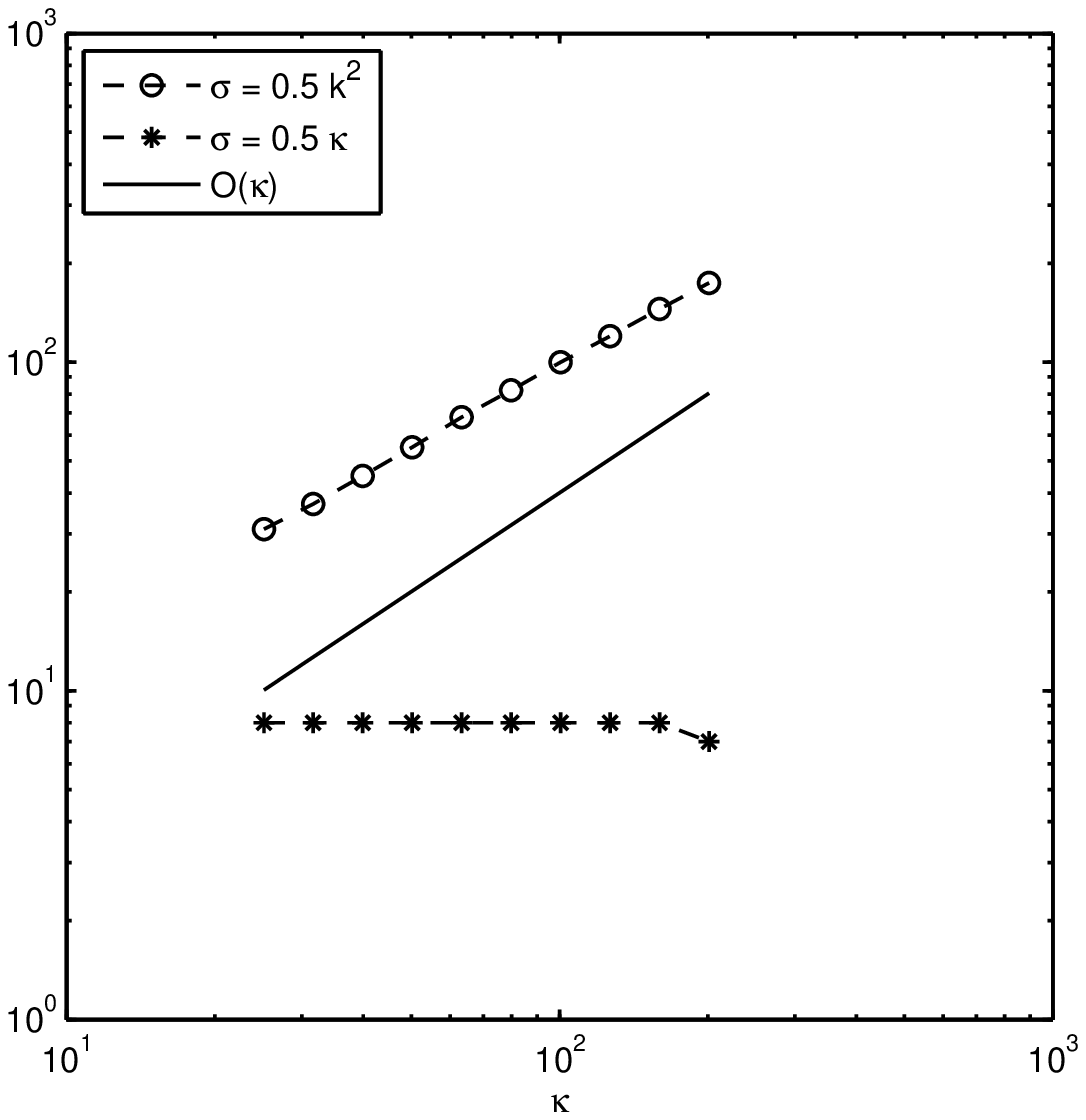}
\caption{The number of GMRES iterations required to solve the problem for different values of $\kappa$. The stopping criteria was set as $tol = 10^{-6}$. Level seven mesh was used in the computation. }
\label{fig:sl_3}
\end{minipage}
\end{figure}

A rigorous derivation of convergence estimate based on bratwurst shaped domains would require us to relate the parameters of these domains to $\tilde{\Lambda}_\epsilon$, which is out of the scope of this paper. Our computations indicate that the pseudospectrum for sufficiently large $\epsilon$ can be contained inside a circle, hence we will instead use the bound for circles to derive an approximate convergence rate. Based on the numerical and theoretical results, it seems to be reasonable to choose 
\begin{equation*}
\tilde{\Lambda}_\epsilon =  B(1, 1 - \frac{\kappa}{\kappa+\sigma}) \oplus B(0,\epsilon).
\end{equation*}
When $\sigma = 0.5 \kappa^2$, there holds that  $\tilde{\Lambda}_\epsilon \subset B(1, 1 - 0.5 \kappa^{-1} ) \oplus B(0,\epsilon)$. To exclude the origin, we choose $\epsilon = 0.25 \kappa^{-1}$. Using equation (\ref{eq:GMRES_B2}) and polynomial minimization over circles \cite{Gr:1997}, this leads to the estimate
\begin{equation*}
\frac{ |\vec{r}_i |}{| \vec{r}_0|}\leq 4 \kappa \left( \frac{1}{1 + 0.25 \kappa^{-1} }  \right)^{i}.
\end{equation*}
This, is the required number of iterations $N$ to reach tolerance $tol$ is
\begin{equation}
\label{eq:N_sl}
N \approx  - 4 \kappa \log{tol}+ 4 \kappa \log{  \kappa}.
\end{equation} 
So, asymptotically, the dominating term is $\kappa \log{ \kappa}$. We cannot observe this effect in our numerical examples as it would require us to use extremely large values of $\kappa$. For instance, when $\epsilon = 10^{-6}$, $\kappa$ would need to be of the order $10^{6}$, before it has an impact on the required number of GMRES iterations. This means, that the non-normality is not practically relevant in this case. 

Estimate (\ref{eq:N_sl}) rises the question, how should the stopping tolerance $tol$ be chosen. Using the tools derived in this paper, the size of relative residual can be related to $\kappa$-dependent norm. As we have studied right preconditioning, the solution obtained from GMRES $\vec{x}_i = B^{-1}\tilde{\vec{x}}_i$ satisfies $\vec{r}_i = A \vec{x}_i - \vec{b} = AB^{-1}\tilde{\vec{x}}_i-\vec{b}$. Hence, we can derive the estimate for the system without a preconditioner. The derived result holds for all left preconditioned systems. 
\begin{lemma} Consider the problem $A \vec{x}_u = \vec{b}$, in which $A \in \mathbb{C}^{N\times N}$ and $\vec{b} \in \mathbb{C}^N$ are related to the finite element discretization of problem (\ref{eq:weak_abs}). Let $\vec{x}_{\tilde{u} }$ be such that $|A \vec{x}_{\tilde{u} }-\vec{b}| \leq tol \; | \vec{b} |$. In addition, let Assumption \ref{ass3} hold. Then there exists a constant $C>0$ independent of $tol, u, \tilde{u}, \kappa$ and $h$ such that
\begin{equation*}
\| u - \tilde{u} \|_\kappa  \leq C tol \; \left(  \|f \|_0  +  h^{-1/2}\| g \|_{0,\partial \Omega} \right).
\end{equation*}
\end{lemma}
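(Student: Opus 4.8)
The plan is to turn the algebraic residual bound into a perturbed weak problem for the error, apply the discrete stability estimate (\ref{eq:abs_fem_stab}), and then convert the resulting Euclidean norms back to the $L^2$-norms of the data with the $h$-explicit norm equivalences. Write $e := u - \tilde u \in V$, $\vec{x}_e := \vec{x}_u - \vec{x}_{\tilde u}$, and $\vec{r} := A\vec{x}_{\tilde u} - \vec{b}$, so that $A\vec{x}_e = -\vec{r}$ and, by hypothesis, $|\vec{r}| \le tol\,|\vec{b}|$. Using the identification (\ref{eq:A_2_WEAK}) between $A$ and the sesquilinear form, $a(e,v) = \vec{x}_v^* A \vec{x}_e = -\vec{x}_v^*\vec{r}$ for every $v \in V$; that is, $e$ is the finite element solution of (\ref{eq:weak_abs}) with the (discrete) load functional $\ell(v) := -\vec{x}_v^*\vec{r}$.

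Next I would represent $\ell$ by an $L^2(\Omega)$ source so that (\ref{eq:abs_fem_stab}) applies: let $f^\ast \in V$ be the discrete Riesz representative defined by $(f^\ast,v)_0 = -\vec{x}_v^*\vec{r}$ for all $v \in V$. Then $a(e,v) = (f^\ast,v)_0$ for all $v\in V$, so applying the discrete stability estimate (\ref{eq:abs_fem_stab}) under Assumption \ref{ass3} (with $g=0$) gives $\|e\|_\kappa \le C\|f^\ast\|_0$. Because $f^\ast \in V$, its norm is controlled by the residual through the lower bound in (\ref{eq:L2_2_EUCLIDIAN}):
\[
\|f^\ast\|_0 \;=\; \sup_{v\in V}\frac{|(f^\ast,v)_0|}{\|v\|_0} \;=\; \sup_{v\in V}\frac{|\vec{x}_v^*\vec{r}|}{\|v\|_0} \;\le\; \sup_{v\in V}\frac{|\vec{x}_v|\,|\vec{r}|}{c\,h^{d/2}|\vec{x}_v|} \;=\; c^{-1}h^{-d/2}|\vec{r}| ,
\]
and hence $\|e\|_\kappa \le C\,h^{-d/2}|\vec{r}| \le C\,h^{-d/2}\,tol\,|\vec{b}|$.

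It then remains to estimate $|\vec{b}|$ in terms of $\|f\|_0$ and $\|g\|_{0,\partial\Omega}$. I would split $\vec{b} = \vec{b}_f + \vec{b}_g$ with $(\vec{b}_f)_i = (f,\varphi_i)$ and $(\vec{b}_g)_i = (g,\varphi_i)_{\partial\Omega}$. Duality, Cauchy--Schwarz and the upper norm equivalences yield $|\vec{b}_f| \le C\,h^{d/2}\|f\|_0$ from (\ref{eq:L2_2_EUCLIDIAN}), and $|\vec{b}_g| \le C\,h^{(d-1)/2}\|g\|_{0,\partial\Omega}$ from the $L^2(\partial\Omega)$--Euclidean norm equivalence obtained by the same scaling and inverse-inequality argument as (\ref{eq:L2_2_EUCLIDIAN}) (this is the boundary estimate already invoked in Section 4.2). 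Substituting $|\vec{b}| \le C\big(h^{d/2}\|f\|_0 + h^{(d-1)/2}\|g\|_{0,\partial\Omega}\big)$ into the bound for $\|e\|_\kappa$ cancels the interior powers of $h$ and leaves exactly $\|u-\tilde u\|_\kappa \le C\,tol\,\big(\|f\|_0 + h^{-1/2}\|g\|_{0,\partial\Omega}\big)$.

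The main obstacle is the bookkeeping of the powers of $h$: the factor $h^{-d/2}$ lost when passing from the residual to $\|f^\ast\|_0$ (i.e. effectively inverting the $h$-scaled mass pairing) must be matched precisely against the $h^{d/2}$ and $h^{(d-1)/2}$ gained in $|\vec{b}_f|$ and $|\vec{b}_g|$, and the surviving $h^{-1/2}$ on the boundary data is exactly the imbalance between the interior and boundary scalings. One also has to be mildly careful that (\ref{eq:abs_fem_stab}) is applied to a genuine $L^2(\Omega)$ source $f^\ast$ rather than to the abstract functional $\ell$, and that $f^\ast \in V$ so that $\|f^\ast\|_0$ really equals the supremum of $(f^\ast,v)_0/\|v\|_0$ over the discrete space; everything else is routine.
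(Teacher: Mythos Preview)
Your proposal is correct and follows essentially the same route as the paper: you represent the residual by a discrete $L^2(\Omega)$ Riesz element (the paper calls it $q$, you call it $f^\ast$), apply the discrete stability estimate (\ref{eq:abs_fem_stab}) to obtain $\|e\|_\kappa \le C h^{-d/2}|\vec{r}|$, and then bound $|\vec{b}|$ via duality, Cauchy--Schwarz, and the interior and boundary $h$-explicit norm equivalences. The only differences are cosmetic (sign convention on the error and the explicit splitting $\vec{b}=\vec{b}_f+\vec{b}_g$ versus treating both terms at once), so nothing further is needed.
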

\begin{proof} Denote $\vec{r} = A \vec{x}_{\tilde{u} }-\vec{b}$. There holds that $A \vec{x}_{\tilde{u} }-\vec{b} = A( \vec{x}_{\tilde{u}} - \vec{x}_u )$, hence,  error $\vec{e} = \vec{x}_{\tilde{u}} - \vec{x}_u$ is a solution to the equation, 
\begin{equation*}
A \vec{e} = \vec{r}. 
\end{equation*} 
Let the space $W=L^2(\Omega)$. Using the same construction as in the proof of Corollary \ref{co:stab}, we define $q \in V$ such that $(q,v)_W = \vec{r}^*\vec{x_v} \; \forall v \in V$ and the linear functional $L(w) = (q,w)_W$. Using standard tools and the norm equivalence (\ref{eq:L2_2_EUCLIDIAN}) gives $\|L \|_{W^\prime}  \leq C h^{-d/2} | \vec{r}  |$. The stability estimate given in equation (\ref{eq:abs_stab}) leads to
\begin{equation*}
\|u - \tilde{u}\|_\kappa\leq C h^{-d/2} | \vec{r} |.
\end{equation*}
Now, this can be written as 
\begin{equation*}
\| u - \tilde{u} \|_\kappa \leq C h^{-d/2} tol \; | \vec{b} |.
\end{equation*}
As there holds that 
\begin{equation*}
| \vec{b} | = \max_{x_v \in \mathbb{R}^N} \frac{ \vec{b}^* \vec{x}_v}{ | \vec{x}_v| } =  \frac{ (f,v) + \left( g,v \right)_{\partial\Omega} }{ | \vec{x}_v|},
\end{equation*}
Cauchy-Schwarz inequality and norm equivivalence (\ref{eq:L2_2_EUCLIDIAN}) gives 
\begin{equation*} | \vec{b}| \leq C h^{d/2} \left( \|f \|_0 + h^{-1/2} \|g \|_0 \right) \end{equation*}
\end{proof}

One should note that identical techniques that were used to prove the above Lemma can be used to derive a relation between the $V$ - norm and $tol$ for any finite dimensional variational problem satisfying Assumption \ref{ass1}.    

We conclude by solving the shifted-Laplace preconditioned problem for right-hand side 
\begin{equation*}
f = \exp{ \left(-10^3 ( (x-0.5)^2 + (y+0.5)^2) \right)}
\end{equation*}
and different values of $\kappa$. The loss term for the preconditioner was chosen as $\sigma = 0.5 \kappa^2$ and $0.5 \kappa$ and the level five mesh was used in the computations.  The number of GMRES iterations is plotted in Figure \ref{fig:sl_3}. In this case, we observe a linear relationship between $\kappa$ and the number of iterations for $\sigma = 0.5 \kappa^2$. The number of iterations stays constant for $\sigma = 0.5 \kappa$. These results are in good agreement with the estimate (\ref{eq:N_sl}).

\section{Conclusions} The main result of the paper is the derivation of exclusion region for pseudospectral set near the origin, Theorem \ref{th:exclusion}. The derivation was made under Assumption \ref{ass1}, stability of the weak problem. All analysis was done a priori, without constructing any matrices. Theorem \ref{th:exclusion} was applied in all three tests, and the derived results were in good agreement with the true behavior of the pseudospectral set. In addition, an inclusion region was derived using the connection between FOV and the pseudospectrum. Boundedness estimates for FOV were derived based on the properties of the weak problem. All given analysis is applicable to a wide range of different problems. 
  
As demonstrated by the examples, the proposed inclusion and exclusion regions led to a worst case convergence estimate for the GMRES method. However, the effect of this overestimation was significant only for extreme parameter values. As illustrated by the first example, more refined convergence estimate would require knowledge from behavior of pseudospectrum for $\epsilon \rightarrow 0$. Such analysis is one direction for continuing this work. 

The aim of the paper was to investigate, if pseudospectrum based convergence estimate can be used for relating properties of weak form to convergence of GMRES. This was proven to be possible. As in Example 4.3, one needs to establish stability and boundedness of the preconditioned problem. The application of the derived theory will lead to inclusion and exclusion regions for pseudospectrum. Second possible direction for future work is to study different preconditioners and problems using the derived tools. Natural extension would be to investigate convergence of GMRES for time-harmonic Maxwell's equations.

\bibliographystyle{plain}      
\bibliography{pseudo_bib}

\end{document}